\newcommand{\E}{\mathbb{E}}
\newcommand{\R}{\mathbb{R}}
\newcommand{\N}{\mathbb{N}}
\newcommand{\Z}{\mathbb{Z}}
\renewcommand{\P}{\mathbb{P}}
\newcommand{\condparentheses}[2]{\left(\left.#1\,\right\vert#2\right)}
\newcommand{\condP}[2]{\mathbb{P}\condparentheses{#1}{#2}}
\newcommand{\condE}[2]{\mathbb{E}\condparentheses{#1}{#2}}
\DeclareMathOperator{\Var}{Var}
\DeclareMathOperator{\Cov}{Cov}
\newcommand{\set}[1]{\left\{#1\right\}}
\newcommand{\cointerval}[1]{\left[#1\right)} % to help with bracket matching
\newcommand{\mat}[1]{\begin{pmatrix}#1\end{pmatrix}}
\newcommand{\smallmat}[1]{\left(\begin{smallmatrix}#1\end{smallmatrix}\right)}
\newcommand{\equalsd}{\overset{d}{=}}
\newcommand{\decreasesto}{\searrow}
\newcommand{\increasesto}{\nearrow}
\renewcommand{\phi}{\varphi}
\let\epsilon\varepsilon
\newcommand{\Binomial}{\mathrm{Binomial}}
\newtheorem{theorem}{Theorem}
\newtheorem{prop}[theorem]{Proposition}
\newtheorem{lemma}[theorem]{Lemma}
\theoremstyle{definition}
\newtheorem{definition}[theorem]{Definition}
\newtheorem{remark}[theorem]{Remark}
\newtheorem{example}[theorem]{Example}
\newcommand{\textandreference}[2]{\texorpdfstring{\hyperref[#2]{#1\ref*{#2}}}{#1\ref*{#2}}}
\newcommand{\lbsect}[1]{\label{s:#1}}
\newcommand{\refsubsect}[1]{\textandreference{Section~}{ss:#1}}
\newcommand{\lbthm}[1]{\label{T:#1}}
\newcommand{\refthm}[1]{\textandreference{Theorem~}{T:#1}}
\newcommand{\lbprop}[1]{\label{P:#1}}
\newcommand{\refprop}[1]{\textandreference{Proposition~}{P:#1}}
\newcommand{\lblemma}[1]{\label{L:#1}}
\newcommand{\reflemma}[1]{\textandreference{Lemma~}{L:#1}}
\newcommand{\lbdefn}[1]{\label{d:#1}}
\newcommand{\refdefn}[1]{\textandreference{Definition~}{d:#1}}
\newcommand{\lbexample}[1]{\label{ex:#1}}
\newcommand{\refexample}[1]{\textandreference{Example~}{ex:#1}}
\newcommand{\refitem}[1]{\ref{item:#1}}
\numberwithin{equation}{section}
\newcommand{\collapsedrcp}[1]{\vec{#1}}
\newcommand{\relent}[2]{H\!\condparentheses{#1}{#2}}
\newcommand{\bigrelent}[2]{\relent{\big.\smash{#1}}{#2}}
\newcommand{\HXsX}{\bigrelent{X^{(s)}}{X}}
\let\grad\nabla
\begin{document}

\title{The saddlepoint approximation factors over sample paths of recursively compounded processes}
\author{Jesse Goodman\footnote{Department of Statistics, University of Auckland, Private Bag 92019, Auckland 1142, New Zealand}}
\date{\today}
  
\maketitle
  
\begin{abstract}
  This paper presents an identity between the multivariate and univariate saddlepoint approximations applied to sample path probabilities for a certain class of stochastic processes.
  This class, which we term the \emph{recursively compounded processes}, includes branching processes and other models featuring sums of a random number of i.i.d.\ terms; and compound Poisson processes and other L\'evy processes in which the additive parameter is itself chosen randomly.
  For such processes,
  $$
  \hat{f}_{X_1,\dotsc,X_N \,\vert\, X_0=x_0}(x_1,\dotsc,x_N) = \prod_{n=1}^N \hat{f}_{X_n \,\vert\, X_0=x_0,\dotsc,X_{n-1}=x_{n-1}}(x_n)
  ,
  $$
  where the left-hand side is a multivariate saddlepoint approximation applied to the random vector $(X_1,\dotsc,X_N)$ and the right-hand side is a product of univariate saddlepoint approximations applied to the conditional one-step distributions given the past.
  
  Two proofs are given.
  The first proof is analytic, based on a change-of-variables identity linking the functions that arise in the respective saddlepoint approximations.
  The second proof is probabilistic, based on a representation of the saddlepoint approximation in terms of tilted distributions, changes of measure, and relative entropies.
\end{abstract}

\section{Introduction}\lbsect{Intro}

The saddlepoint approximation provides a systematic and explicit method for approximating the unknown probability density function $f(x)$ of a random variable $X$ whose moment generating function is known.
Namely, define 
\begin{equation}\label{BasicUnivariateMGFCGF}
  M_X(s) = \E(e^{sX}), \qquad K_X(s)=\log M_X(s),
\end{equation}
the \emph{moment generating function} (MGF) and \emph{cumulant generating function} (CGF), respectively.
For given $x$, we seek a solution $\hat{s}$ to the \emph{saddlepoint equation}
\begin{equation}\label{UnivariateSE}
  K'_X(\hat{s}) = x
  .
\end{equation}
If such a solution exists, we call $\hat{s}$ the \emph{saddlepoint} and we define
\begin{equation}\label{UnivariateSPA}
  \hat{f}_X(x) = \frac{\exp\left( K_X(\hat{s}) - \hat{s}x \right)}{\sqrt{2\pi K_X''(\hat{s})}}
  ,
\end{equation}
the \emph{saddlepoint approximation} to the probability density function of $X$.

The saddlepoint approximation has a straightforward generalisation to the multivariate case.
The multivariate MGF and CGF for $X_1,\dotsc,X_N$ are
\begin{equation}\label{MultivariateMGFCGF}
  \begin{aligned}
    M_{X_1,\dotsc,X_N}(s_1,\dotsc,s_N) &= \E(e^{s_1 X_1 + \dotsb + s_N X_N}), 
    \\
    K_{X_1,\dotsc,X_N}(s_1,\dotsc,s_N) &= \log M_{X_1,\dotsc,X_N}(s_1,\dotsc,s_N),
  \end{aligned}
\end{equation}
and we write $K'_{X_1,\dotsc,X_N}(s_1,\dotsc,s_N)$, $K''_{X_1,\dotsc,X_N}(s_1,\dotsc,s_N)$ for the gradient (column) vector and Hessian matrix, respectively, both of which are defined everywhere in the interior of the domain.
The saddlepoint equation for $\hat{s}_1,\dotsc,\hat{s}_N$ reads
\begin{equation}\label{MultivariateSE}
  K'_{X_1,\dotsc,X_N}(\hat{s}_1,\dotsc,\hat{s}_N) = \mat{x_1\\ \vdots \\ x_N}
\end{equation}
and, if a solution exists, the saddlepoint approximation becomes
\begin{equation}\label{MultivariateSPA}
  \hat{f}_{X_1,\dotsc,X_N}(x_1,\dotsc,x_N) = \frac{\exp\bigl( K_{X_1,\dotsc,X_N}(\hat{s}_1,\dotsc,\hat{s}_N) - \sum_{n=1}^N \hat{s}_n x_n \bigr)}{\sqrt{\det\bigl( 2\pi K''_{X_1,\dotsc,X_N}(\hat{s}_1,\dotsc,\hat{s}_N) \bigr)}}
  .
\end{equation}

The saddlepoint approximation can be applied equally to integer-valued random variables, with $\hat{f}_X(x)$ yielding an approximation to the probability mass function $f_X(x)=\P(X=x)$ rather than the probability density function.

More generally, the saddlepoint approximation can be applied in various ways to conditional distributions, marginal distributions, and so on, with the choice often determined in practice by which distributions have readily computable CGFs.
See \cite{Butler2007} for a range of examples.
In this paper, we consider a class of processes for which we can compute both a multivariate CGF and CGFs for certain conditional distributions, yielding two different ways in which we can apply the saddlepoint approximation.
We illustrate with an example.

\begin{example}[Galton-Watson branching process]\lbexample{GWBP}
  Let $(X_n)_{n\in\Z_+}$ be a Galton-Watson branching process defined recursively by
  \begin{equation}
    X_n = \sum_{j=1}^{X_{n-1}} Y^{(j)}_n, \quad n\in\N,
  \end{equation}
  where $X_0=x_0\in\N$ is fixed and $Y^{(j)}_n$, $j,n\in\N$, are i.i.d.\ copies of a random variable $Y$ defining the offspring distribution.
  The offspring distribution is typically selected in a simple way with a known CGF $K_Y(s)$.
  
  For fixed $x_1,\dotsc,x_N$, the sample path probability $\condP{X_1=x_1,\dotsc,X_N=x_N}{X_0=x_0}$ factors using the Markov property as 
  \begin{equation}\label{GWBPMarkov}
    f_{X_1,\dotsc,X_N \,\vert\, X_0=x_0}(x_1,\dotsc,x_N) = \prod_{n=1}^N f_{X_n \,\vert\, X_{n-1}=x_{n-1}}(x_n)
    .
  \end{equation}
  Given $X_{n-1}=x_{n-1}$, the conditional distribution of $X_n$ is that of a sum of $x_{n-1}$ i.i.d.\ copies of the offspring distribution $Y$.
  The corresponding CGF $K_{X_n\,\vert\,X_{n-1}=x_{n-1}}(s) = x_{n-1} K_Y(s)$ is known, so we may use $N$ univariate saddlepoint approximations to obtain separate approximations $\hat{f}_{X_n \,\vert\, X_{n-1}=x_{n-1}}(x_n)$ to each transition probability $\condP{X_n=x_n}{X_{n-1}=x_{n-1}}$ in the product from \eqref{GWBPMarkov}.
  
  Alternatively, we may deal directly with the correlated random vector $(X_1,\dotsc,X_N)$.
  The multivariate CGF has an explicit, if slightly complicated, form:
  \begin{equation}
    K_{X_1,\dotsc,X_N \,\vert\, X_0=x_0}(s_1,\dotsc,s_N) = x_0 K_Y(s_1+K_Y(s_2 + K_Y(\dotsb + K_Y(s_{N-1} + K_Y(s_N)) \dotsb)))
    .
  \end{equation}
  We may therefore use a single multivariate saddlepoint approximation to obtain a direct approximation $\hat{f}_{X_1,\dotsc,X_N \,\vert\, X_0=x_0}(x_1,\dotsc,x_N)$ to the sample path probability in \eqref{GWBPMarkov}.
  
  The main result of this paper will show that for this branching process, and for other processes with a similar recursive structure, these two seemingly different approximations are equal.
\end{example}

\section{Main result}\lbsect{MainResult}

\subsection{Recursively compounded processes}

We consider random processes featuring \emph{compound sums} formed from a random number of i.i.d.\ summands, similar to \refexample{GWBP}.
The general definition allows for additional independent terms, time inhomogeneity, dependence on the entire history, non-integer values, and random variables of differing dimensions.

\begin{definition}\lbdefn{RecComp}
  We call a process $(X_0,X_1,\dotsc,X_N)$ \emph{recursively compounded} if, for each $n=1,\dotsc,N$, the conditional distribution of $X_n$ given $X_0,\dotsc,X_{n-1}$ is the same as $Q_n(X_0,\dotsc,X_{n-1})$, where
  \begin{equation}
    Q_n(x_0,\dotsc,x_{n-1}) = \xi_n + \sum_{m=0}^{n-1} \sum_{i=1}^{d_m} Z_{n,m,i}(x_{m,i});
  \end{equation}
  where $\xi_n$ is a random variable and $Z_{n,m,i}(t)$ are processes, all with values in $\R^{d_n}$ and all independent of each other and of $X_0,\dotsc,X_{n-1}$; and where, for each $m<n$ and each $i\leq d_m$, one or more of the following conditions holds:
  \begin{enumerate}
    \item\label{item:RecCompDisc}
    $X_{m,i}$ takes values in $\Z_+$ and $Z_{n,m,i}(t)$ is defined for $t\in\Z_+$ by
    \begin{equation}\label{RecCompDiscFormula}
      Z_{n,m,i}(t) = \sum_{j=1}^t Y_{n,m,i,j},
    \end{equation}
    where $Y_{n,m,i,j}$, $j\in\N$, are i.i.d.\ with values in $\R^{d_n}$, independent of everything else; 
    
    \item\label{item:RecCompCts}
    $X_{m,i}$ takes values in $\cointerval{0,\infty}$ and $(Z_{n,m,i}(t))_{t\in\cointerval{0,\infty}}$ is a L\'evy process with values in $\R^{d_n}$, independent of everything else; or
    
    \item\label{item:RecCompLin}
    $X_{m,i}$ has arbitrary values and $Z_{n,m,i}(t)=ct$ for some constant $c\in\R^{d_n}$.
  \end{enumerate}
\end{definition}

Note that in \eqref{RecCompDiscFormula} and elsewhere, sums $\sum_{j=n}^m a_j$ are taken to be identically 0 when $m=n-1$.
The multivariate MGF $M_{X_1,\dotsc,X_N}(s_1,\dotsc,s_N)$ and the resulting saddlepoint approximation have a simple extension to allow each $X_n$ to be vector-valued, see the discussion in \refsubsect{VectorsGradients}.

\subsection{Main result: sample path probabilities}

The main result of this paper is that, applied to a sample path probability for a recursively compounded process, the saddlepoint approximation can be factored across steps.

\begin{theorem}\lbthm{SamplePathProbsFactor}
  Let $(X_0,X_1,\dotsc,X_N)$ be a recursively compounded process and let $x_n\in\R^{d_n}$ for all $n$.
  Then
  \begin{equation}\label{SamplePathProbsAsProduct}
    \hat{f}_{X_1,\dotsc,X_N \,\vert\, X_0=x_0}(x_1,\dotsc,x_N) = \prod_{n=1}^N \hat{f}_{X_n \,\vert\, X_0=x_0,\dotsc,X_{n-1}=x_{n-1}}(x_n)
    .
  \end{equation}
\end{theorem}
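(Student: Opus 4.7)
The plan is to induct on $N$ via the one-step factorization
\begin{equation*}
  \hat{f}_{X_1,\dotsc,X_N \,\vert\, X_0=x_0}(x_1,\dotsc,x_N)
  =
  \hat{f}_{X_1,\dotsc,X_{N-1} \,\vert\, X_0=x_0}(x_1,\dotsc,x_{N-1})
  \cdot
  \hat{f}_{X_N \,\vert\, X_0=x_0,\dotsc,X_{N-1}=x_{N-1}}(x_N),
\end{equation*}
iterating which yields \eqref{SamplePathProbsAsProduct}, since $(X_0,\dotsc,X_{N-1})$ is itself recursively compounded. Write $\bar K_n(s) := K_{X_n \,\vert\, X_0=x_0,\dotsc,X_{n-1}=x_{n-1}}(s)$ and $K(s_1,\dotsc,s_N) := K_{X_1,\dotsc,X_N \,\vert\, X_0=x_0}(s_1,\dotsc,s_N)$. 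The structural input from \refdefn{RecComp} I rely on is that in each of cases \refitem{RecCompDisc}--\refitem{RecCompLin}, the CGF of $Z_{n,m,i}(t)$ has the form $t\,\psi_{n,m,i}(s)$ (with $\psi_{n,m,i}$ equal to $K_{Y_{n,m,i}}$, the L\'evy Laplace exponent, or $s\cdot c$, respectively). Conditioning on the past therefore yields
\begin{equation*}
  \bar K_n(s)
  =
  K_{\xi_n}(s) + \sum_{m=0}^{n-1} x_m \cdot \phi_{n,m}(s),
  \qquad
  \phi_{n,m}(s) := \bigl(\psi_{n,m,i}(s)\bigr)_{i=1}^{d_m},
\end{equation*}
which is \emph{affine} in the conditioned values $x_m$.

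Using this affineness inside a tower-property expansion of $M_{X_1,\dotsc,X_N \,\vert\, X_0=x_0}$ (conditioning on $X_0,\dotsc,X_{N-1}$ in the innermost step) I obtain the recursion
\begin{equation*}
  K(s_1,\dotsc,s_N)
  =
  K_{\xi_N}(s_N) + x_0 \cdot \phi_{N,0}(s_N)
  + \tilde K(s_1', \dotsc, s_{N-1}'),
\end{equation*}
where $\tilde K := K_{X_1,\dotsc,X_{N-1}\,\vert\,X_0=x_0}$ and $s_n' := s_n + \phi_{N,n}(s_N)$. Differentiating, the multivariate saddlepoint equations decouple: in the shifted variables $\hat s_n' := \hat s_n + \phi_{N,n}(\hat s_N)$, the first $N-1$ become the multivariate saddlepoint equations for $\tilde K$, while the $N$-th, after using those to substitute $\nabla_{u_n}\tilde K(\hat s') = x_n$, collapses to the univariate saddlepoint equation $\nabla \bar K_N(\hat s_N) = x_N$. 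The exponent $K(\hat s) - \sum_n \hat s_n\cdot x_n$ then splits as $[\bar K_N(\hat s_N) - \hat s_N\cdot x_N] + [\tilde K(\hat s') - \sum_{n<N}\hat s_n'\cdot x_n]$: the cross terms $\phi_{N,m}(\hat s_N)\cdot x_m$ produced by the substitution $\hat s_n = \hat s_n' - \phi_{N,n}(\hat s_N)$ combine with $K_{\xi_N}(\hat s_N) + x_0\cdot\phi_{N,0}(\hat s_N)$ to rebuild $\bar K_N(\hat s_N)$ exactly.

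The delicate step, and the main obstacle I expect, is matching the Hessian determinants $\det K''(\hat s) = \det \tilde K''(\hat s')\cdot \det \bar K_N''(\hat s_N)$. Organising $K''(\hat s)$ into blocks indexed by $(m,n)\in\{1,\dotsc,N\}^2$ and computing via the chain rule produces an upper-left $(N-1)\times(N-1)$ block $A := \tilde K''(\hat s')$, off-diagonal $(n,N)$ blocks of the form $(A\,D\phi)_n$ (where $D\phi$ stacks the Jacobians $D\phi_{N,n}(\hat s_N)$ for $n<N$), and an $(N,N)$ block of the form $\bar K_N''(\hat s_N) + D\phi^{\top} A\,D\phi$. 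Arriving at this form for the $(N,N)$ block critically uses the first-order saddlepoint equations: they convert the second-order term $\sum_n \nabla_{u_n}\tilde K(\hat s')\,D^2\phi_{N,n}(\hat s_N)$ into $\sum_n x_n\,D^2\phi_{N,n}(\hat s_N)$, which combines with $K_{\xi_N}''(\hat s_N) + x_0\,D^2\phi_{N,0}(\hat s_N)$ to reconstitute precisely $\bar K_N''(\hat s_N)$. A Schur-complement calculation relative to $A$ then cancels $D\phi^{\top} A\,D\phi$ exactly, yielding $\det K''(\hat s) = \det A\cdot\det \bar K_N''(\hat s_N)$. Combined with the exponent identity, this establishes the one-step factorization and the induction closes.
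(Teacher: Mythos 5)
Your argument is correct, and it rests on the same analytic structure as the paper's first proof; the main difference is expository. The paper sets up a single global change of variables $T$ (whose blocks are the recursively defined functions $\tau_m$), shows in \reflemma{FunctionsMappedByT} that the objective $K_{X_1,\dotsc,X_N\,\vert\,X_0=x_0}(s)-\sum_n s_n x_n$ coincides, under $T$, with the corresponding objective for the \emph{independent} one-step conditionals $Q_1,\dotsc,Q_N$, and then concludes from the observation that the Jacobian $T'$ is unit lower-triangular, so $\det T'=1$. You instead peel off the last coordinate via the one-step substitution $s_n\mapsto s_n'=s_n+\phi_{N,n}(s_N)$ and induct, obtaining the determinant identity
\begin{equation*}
  \det K''(\hat s) = \det \tilde K''(\hat s')\cdot \det \bar K_N''(\hat s_N)
\end{equation*}
from a block-LDU (Schur-complement) factorisation of the Hessian. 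This is the one-block version of the paper's $K''_X = T'\,K''_Q\,(T')^\top$ with unit-triangular $T'$: iterating your LDU step $N-1$ times reconstructs the paper's global factorisation, but you avoid the bookkeeping of the auxiliary functions $\rho_{n,m}$. A genuine merit of your organisation is that it makes explicit a point the paper handles in one sentence: the Hessian identity only holds \emph{at the critical point}, because the chain-rule term carrying first derivatives of the inner function must be eliminated by the saddlepoint equations; you carry this out concretely by replacing $\nabla_{u_n}\tilde K(\hat s')$ with $x_n$ to reconstitute $\bar K_N''(\hat s_N)$ in the $(N,N)$ block. For completeness, you should also record the domain assertion of \refthm{SamplePathProbsFactor}, namely that either both sides of \eqref{SamplePathProbsAsProduct} exist or neither does; your decoupling of the saddlepoint equations (existence) and your LDU factorisation (nonsingularity) give this directly but it should be stated.
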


In \eqref{SamplePathProbsAsProduct}, each factor on the right-hand side is a saddlepoint approximation applied to the conditional distribution of $X_n$ given $X_m=x_m$ for $m<n$.
In the notation of \refdefn{RecComp}, 
\begin{equation}
  \hat{f}_{X_n \,\vert\, X_0=x_0,\dotsc,X_{n-1}=x_{n-1}}(x_n) = \hat{f}_{Q_n(x_0,\dotsc,x_{n-1})}(x_n),
\end{equation}
the saddlepoint approximation evaluated at $x_n$ for the random variable $Q_n(x_0,\dotsc,x_{n-1})$.

\begin{remark}
  Part of the assertion of \refthm{SamplePathProbsFactor} is that the two sides of \eqref{SamplePathProbsAsProduct} have the same domain.
  That is, if one side is undefined (either because the corresponding saddlepoint $\hat{s}$ does not exist or because the determinant from \eqref{MultivariateSPA} vanishes) then so is the other, and vice versa.
\end{remark}

\subsection{Examples}

\begin{example}
  The general form of a recursively compounded process with non-negative-integer-valued random variables, as in \refdefn{RecComp}~\refitem{RecCompDisc}, could be described as a time-inhomogeneous age-dependent discrete-time branching process with immigration.
  If the process is vector-valued, the branching process is a multitype branching process.
The random variables $\xi_n$ describe the immigration, and the random variables $Y^{(j)}_{n,n-a,i}$ describe the offspring distribution at time $n$ for individuals of type $i$ and current age $a\in\N$.
\end{example}

\begin{example}\lbexample{PedDavFok}
  Pedeli, Davison and Fokianos \cite{PedDavFok2015} model a time series of non-negative-integer valued data as an integer-valued auto-regressive process of order $p$ by recursively setting
  \begin{equation}
    X_n = \xi_n + \Binomial(X_{n-1},\alpha_1) + \dotsb + \Binomial(X_{n-p},\alpha_p)
    ,
  \end{equation}
  where $\xi_n$ are i.i.d.\ innovations and where all terms are chosen conditionally independently at each step.
  This is a recursively compounded process as in \refdefn{RecComp}~\refitem{RecCompDisc}: $Y_{n,m,1}$ has the Bernoulli$(\alpha_{n-m})$ distribution if $n-p\leq m<n$, with $Y_{n,m,1}=0$ for $m<n-p$.
  (Slight adjustments might be needed to handle the first $p$ time series values, which in \cite{PedDavFok2015} are taken as fixed.
  This can be accommodated in the setup of \refdefn{RecComp} by storing $p$ initial values $X_0,X_{-1},\dotsc,X_{1-p}$ as an initial vector $\vec{X}_0$.)
  
  The probabilities $\alpha_1,\dotsc,\alpha_p$ play the role of autoregressive coefficients, to be estimated based on data.
  Due to the integer-valued nature of the model, standard estimators based on Gaussian regression may be unsatisfactory, and maximum-likelihood-based estimators are proposed instead.
  Computing the likelihood means computing the sample path probability $\condP{X_1=x_1,\dotsc,X_N=x_N}{\smash{\vec{X}_0=\vec{x}_0}}$, where $x_1,\dotsc,x_N$ are observed values from a time series (after storing the first $p$ time series values in $\vec{x}_0$).
  Exact calculation becomes impractical once $p$ is even moderately large, but the saddlepoint approximation remains computable, so the authors use \eqref{SamplePathProbsAsProduct} as an approximate likelihood function and derive an estimator from it.
  
  \refthm{SamplePathProbsFactor} shows that this approximate likelihood, and the corresponding estimator, are equivalent whether computed by a multivariate or a step-wise approach, as in the left- and right-hand sides, respectively, of \eqref{SamplePathProbsAsProduct}.
\end{example}

\begin{example}
  Davison, Hautphenne and Kraus \cite{DavHauKra2021} perform inference on birth and death rates for a population modelled by a continuous-time branching process observed at discrete times.
  The branching property implies that this process is a recursively compounded process, and indeed it reduces to a Galton-Watson branching process as in \refexample{GWBP} if the observation times are equally spaced.
  Similar to \refexample{PedDavFok}, the authors use the saddlepoint approximation to obtain approximations to a sample path probability, which is interpreted an approximate likelihood, and then maximise this approximate likelihood to obtain saddlepoint-based estimates for the birth and death rates.
  
  The saddlepoint approximation can be applied either in univariate form to approximate the separate transition probabilities, or in multivariate form to approximate the sample path probabilities.
  The authors implement both approaches \cite[Web Appendix C]{DavHauKra2021}, with the univariate saddlepoint equations solved algebraically and the multivariate saddlepoint equation solved numerically.
  The two approaches yielded similar calculated values, in accordance with \refthm{SamplePathProbsFactor}, though the non-trivial numerical errors suggest the potential delicacy of solving multivariate saddlepoint equations numerically.
\end{example}

\section{Proofs}\lbsect{Proofs}

We give two proofs of \refthm{SamplePathProbsFactor}, one analytic and one probabilistic.
Both proofs rely on the structure of recursively compounded processes as expressed via CGFs.
We begin by stating vector and derivative conventions for multivariate MGFs and other functions.

\subsection{Conventions for vectors, gradients, Jacobians, and Hessians}\label{ss:VectorsGradients}

We allow each $X_n$ to have vector values, $X_n\in\R^{d_n}$, so that the MGF $M_{X_n}(s_n)$ and CGF $K_{X_n}(s_n)$ become functions of a vector-valued argument $s_n$ of the same dimension.
To match with \eqref{BasicUnivariateMGFCGF}, we follow the vector convention of \cite{GoodmanMLEAccuracy2022}: $X_n$ is interpreted as a column vector or $d_n\times 1$ matrix, and $s_n$ is a row vector or $1\times d_n$ matrix, so that $s_n X_n$ is a $1\times 1$ matrix, i.e., a scalar.
With this convention, $M_{X_n}(s_n)=\E(e^{s_n X_n})=\E(e^{s_{n,1}X_{n,1}+\dotsb+s_{n,d_n}X_{n,d_n}})$ is the multivariate MGF of $X_n$, where $X_{n,i}$ and $s_{n,i}$, $i=1,\dotsc,d_n$, denote the entries of the vectors $X_n$ and $s_n$, respectively.
The multivariate MGF $M_{X_1,\dotsc,X_N}(s_1,\dotsc,s_N)$ from \eqref{MultivariateMGFCGF} coincides with the multivariate MGF of the concatenated vector $\vec{X}=\smallmat{X_1\\ \vdots \\ X_N}$ with vector argument $\vec{s}=\mat{s_1 & \dotsb & s_N}$, interpreted as column and row vectors, respectively, both written in block form and having dimension $D=\sum_{n=1}^N d_n$.

Let $\phi(s_1,\dotsc,s_N)$ be a function with $k$-dimensional row vector values depending on row vectors $s_1,\dotsc,s_N$ of dimensions $d_1,\dotsc,d_N$.
By $\phi'(s_1,\dotsc,s_N)$ we mean the $D\times k$ matrix of partial derivatives $\frac{\partial\phi_j}{\partial s_{m,i}}$, with rows indexed by the entries $s_{m,i}$ (ordered lexicographically as $s_{1,1},\dotsc,s_{1,d_1}$; $s_{2,1},\dotsc,s_{2,d_2};\dotsc;s_{N,1},\dotsc,s_{N,d_N}$) and columns indexed by the entries of $\phi$.
Thus when $\phi$ has scalar values, $\phi'$ means the gradient, interpreted as a column vector; when $\phi$ has vector values, $\phi'$ means the Jacobian matrix (or possibly its transpose, depending on one's preferred row and column conventions for Jacobians).
When $\phi$ has scalar values, $\phi''(s_1,\dotsc,s_N)$ means the $D\times D$ Hessian matrix of mixed partial derivatives $\frac{\partial^2\phi}{\partial s_{m,i}\partial s_{n,j}}$.

With these conventions, the saddlepoint approximation uses almost identical notation when applied to vector-valued random variables, except that the denominator of \eqref{UnivariateSPA} is replaced by $\sqrt{\det(2\pi K''_X(\hat{s}))}$ similar to \eqref{MultivariateSPA}.
In the remainder of the paper, $X_n$ is vector-valued of dimension $d_n$, but the reader may restrict their attention to scalar-valued $X_n$ with virtually no change to the text.

\subsection{The multivariate CGF of \texorpdfstring{$X_0,\dotsc,X_N$}{X\_0, ..., X\_N}}

The key feature uniting \refdefn{RecComp}~\ref{item:RecCompDisc}--\ref{item:RecCompLin} is that the CGFs for $Z_{n,m,i}(t)$ depend linearly on $t$.
Indeed, it is easy to see that $Z_{n,m,i}(t+t')$ has the same distribution as the sum of two independent random variables with distributions $Z_{n,m,i}(t)$ and $Z_{n,m,i}(t')$; here we assume that $t,t'$ belong to $\Z_+$, $\cointerval{0,\infty}$, or $\R$, depending on which of \ref{item:RecCompDisc}--\ref{item:RecCompLin} applies.
Thus
\begin{equation}
  \begin{aligned}
    M_{Z_{n,m,i}(t+t')}(s_n) &= M_{Z_{n,m,i}(t)}(s_n) M_{Z_{n,m,i}(t')}(s_n), 
    \\
    K_{Z_{n,m,i}(t+t')}(s_n) &= K_{Z_{n,m,i}(t)}(s_n) + K_{Z_{n,m,i}(t')}(s_n),
  \end{aligned}
\end{equation}
leading to
\begin{equation}\label{KZtKZ1}
  K_{Z_{n,m,i}(t)}(s) = t K_{Z_{n,m,i}(1)}(s)
\end{equation}
for all applicable $t$.

Similarly, the CGF of $Q_n(x_0,\dotsc,x_{n-1})$ is affine in $x_0,\dotsc,x_{n-1}$.
To state this compactly in vector notation, define $g_{n,m}(s_n)$ for $m<n$ to be the $d_m$-dimensional row vector whose $i^\text{th}$ entry $g_{n,m,i}(s_n)$ is given by
\begin{equation}\label{gnmiFormula}
  g_{n,m,i}(s_n) = K_{Z_{n,m,i}(1)}(s_n).
\end{equation}
Then, by \refdefn{RecComp} and \eqref{KZtKZ1}, 
\begin{align}
  K_{Q_n(x_0,\dotsc,x_{n-1})}(s_n) &= K_{\xi_n}(s_n) + \sum_{m=0}^{n-1} \sum_{i=1}^{d_m} K_{Z_{n,m,i}(1)}(s_n) x_{m,i}
  \notag\\&
  = K_{\xi_n}(s_n) + \sum_{m=0}^{n-1} g_{n,m}(s_n) x_m 
  .
  \label{KQnFormula}
\end{align}

\begin{lemma}\lblemma{KX0XN}
  For $m=N,N-1,\dotsc,1,0$, let the functions $\tau_m(s_m,\dotsc,s_N)$ with $d_m$-dimensional row vector values be defined recursively by $\tau_N(s_N) = s_N$ and
  \begin{equation}\label{tauRecursion}
    \tau_m(s_m,\dotsc,s_N) = s_m + \sum_{n=m+1}^N g_{n,m}(\tau_n(s_n,\dotsc,s_N)) \quad\text{for }0\leq m<N
    .
  \end{equation}
  Then
  \begin{align}\label{KX0XNFormula}
    K_{X_0,\dotsc,X_N}(s_0,\dotsc,s_N) = K_{X_0}(\tau_0(s_0,\dotsc,s_N)) + \sum_{n=1}^N K_{\xi_n}(\tau_n(s_n,\dotsc,s_N))
    .
  \end{align}
\end{lemma}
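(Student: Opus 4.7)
The plan is to prove \reflemma{KX0XN} by induction on $N$, using iterated conditional expectation to unfold one step at a time, with the recursion for $\tau_m$ matching the shifts that arise when conditioning on $X_N$.

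\emph{Step 1 (one-step recursion).} First I would condition on $X_0,\dotsc,X_{N-1}$ and peel off the $X_N$ factor. Since the conditional distribution of $X_N$ given $X_0,\dotsc,X_{N-1}$ is $Q_N(X_0,\dotsc,X_{N-1})$, formula \eqref{KQnFormula} gives
\begin{equation*}
  \condE{e^{s_N X_N}}{X_0,\dotsc,X_{N-1}} = \exp\Bigl(K_{\xi_N}(s_N) + \sum_{m=0}^{N-1} g_{N,m}(s_N)\,X_m\Bigr).
\end{equation*}
Multiplying by $e^{s_0 X_0 + \dotsb + s_{N-1}X_{N-1}}$ and taking expectations collapses the contribution from $X_N$ into a shift of the arguments of the CGF of $(X_0,\dotsc,X_{N-1})$:
\begin{equation*}
  K_{X_0,\dotsc,X_N}(s_0,\dotsc,s_N) = K_{\xi_N}(s_N) + K_{X_0,\dotsc,X_{N-1}}(\tilde{s}_0,\dotsc,\tilde{s}_{N-1}),
\end{equation*}
where $\tilde{s}_m = s_m + g_{N,m}(s_N)$ for $0\leq m\leq N-1$.

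\emph{Step 2 (induction on $N$).} The base case $N=0$ is immediate: the recursion gives $\tau_0(s_0)=s_0$, and \eqref{KX0XNFormula} reduces to $K_{X_0}(s_0)=K_{X_0}(s_0)$. For the inductive step, I would note that $(X_0,\dotsc,X_{N-1})$ is itself recursively compounded (the conditional distributions defining the structure only involve indices up to $N-1$), so the induction hypothesis applies. Writing $\tau^{(N-1)}_m$ for the functions built from the recursion \eqref{tauRecursion} truncated at $N-1$, the hypothesis yields
\begin{equation*}
  K_{X_0,\dotsc,X_{N-1}}(\tilde{s}_0,\dotsc,\tilde{s}_{N-1}) = K_{X_0}\bigl(\tau^{(N-1)}_0(\tilde{s}_0,\dotsc,\tilde{s}_{N-1})\bigr) + \sum_{n=1}^{N-1} K_{\xi_n}\bigl(\tau^{(N-1)}_n(\tilde{s}_n,\dotsc,\tilde{s}_{N-1})\bigr).
\end{equation*}

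\emph{Step 3 (matching the $\tau$ functions).} The crux is then the identity
\begin{equation*}
  \tau^{(N-1)}_m(\tilde{s}_m,\dotsc,\tilde{s}_{N-1}) = \tau_m(s_m,\dotsc,s_N), \qquad 0\leq m\leq N-1,
\end{equation*}
which I would prove by a secondary downward induction on $m$. For $m=N-1$ both sides equal $s_{N-1}+g_{N,N-1}(s_N)$. For $m<N-1$, expanding $\tau^{(N-1)}_m$ via its defining recursion, substituting $\tilde{s}_m = s_m+g_{N,m}(s_N)$, and invoking the secondary induction for indices $m+1,\dotsc,N-1$ converts the sum over $n\in\set{m+1,\dotsc,N-1}$ plus the extra $g_{N,m}(s_N)$ term into $\sum_{n=m+1}^N g_{n,m}(\tau_n(s_n,\dotsc,s_N))$, which is precisely $\tau_m(s_m,\dotsc,s_N)-s_m$. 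Combined with Step 2 and the one-step recursion from Step 1 (noting $\tau_N(s_N)=s_N$ so the $K_{\xi_N}$ term is correct), this gives \eqref{KX0XNFormula}.

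\emph{Main obstacle.} The analytic content of the lemma is essentially \eqref{KQnFormula} combined with the tower property; the delicate part is bookkeeping the indices so that the $\tau$-recursion built from the truncated process aligns with the $\tau$-recursion of the full process after the argument shift $s_m\mapsto s_m+g_{N,m}(s_N)$. That alignment is exactly what the nested downward induction in Step 3 is designed to handle, and it is where care is most needed.
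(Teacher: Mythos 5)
Your proof is correct and takes essentially the same approach as the paper: both peel off the last variable via a one-step conditional CGF identity and then telescope by induction, tracking the accumulated argument shifts. The only difference is bookkeeping — the paper maintains auxiliary quantities $\rho_{n,m}$ in a single downward induction on the index $n$ (within fixed $N$), while you run an outer induction on $N$ and add a secondary downward induction in $m$ (your Step~3) to match the $\tau$-functions of the truncated process to those of the full one; these are interchangeable ways of organizing the same computation.
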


The proof of \reflemma{KX0XN} is conceptually simple, recursively applying the conditional distributions from \refdefn{RecComp} to simplify the MGF and CGF.
The details and notation become slightly intricate and will reappear only in the proof of \reflemma{PreserveRecComp} below.

\begin{proof}
Condition on the past before step $n$ and use \eqref{KQnFormula} to obtain
\begin{align}
  &M_{X_0,\dotsc,X_n}(s_0,\dotsc,s_n) 
  \notag\\&\quad
  = \E\left( \exp\left( \sum_{m=0}^{n-1} s_m X_m \right) \condE{e^{s_n X_n}}{X_0,\dotsc,X_{n-1}} \right)
  \notag\\&\quad
  = \E\left( \exp\left( \sum_{m=0}^{n-1} s_m X_m \right) \left. \exp\left( K_{\xi_n}(s_n) + \sum_{m=0}^{n-1} g_{n,m}(s_n) x_m \right) \right\vert_{x_m=X_m \,\forall m<n} \right)
  \notag\\&\quad
  = \exp\left( K_{\xi_n}(s_n) \right) \E\left( \exp\left( \sum_{m=0}^{n-1} [s_m + g_{n,m}(s_n)] X_m \right) \right)
  .
\end{align}
Taking logarithms of both sides, the final equation can be equivalently written as
\begin{equation}\label{KnKxiKnminus1Relation}
  K_{X_0,\dotsc,X_n}(s_0,\dotsc,s_n) = K_{\xi_n}(s_n) + K_{X_0,\dotsc,X_{n-1}}(s_0+g_{n,0}(s_n), \dotsc, s_{n-1} + g_{n,n-1}(s_n))
  .
\end{equation}
To complete the proof, we show by induction that for $n=N,N-1,\dotsc,1,0$,
\begin{equation}\label{KX0XNInductive}
  K_{X_0,\dotsc,X_N}(s_0,\dotsc,s_N) = K_{X_0,\dotsc,X_n}(\rho_{n,0},\dotsc,\rho_{n,n}) + \sum_{m=n+1}^N K_{\xi_m}(\tau_m(s_m,\dotsc,s_N))
  ,
\end{equation}
where we have abbreviated
\begin{equation}\label{rhonmFormula}
  \rho_{n,m} = \rho_{n,m}(s_m,\dotsc,s_N) = s_m+\sum_{k=n+1}^N g_{k,m}(\tau_k(s_k,\dotsc,s_N)), \quad 0\leq m\leq n \leq N
  ,
\end{equation}
and where for $n=N$ we interpret the empty sums as $0$.
Thus in the base case $n=N$ we have $\rho_{N,m}(s_m,\dotsc,s_N)=s_m$ and the sum in \eqref{KX0XNInductive} vanishes. 
In particular, \eqref{KX0XNInductive} holds trivially for $n=N$.

To advance the induction, suppose \eqref{KX0XNInductive} holds for a given value $n\in\set{1,\dotsc,N}$.
Apply \eqref{KnKxiKnminus1Relation} with $s_m$ replaced by $\rho_{n,m}$ to obtain
\begin{align}
  K_{X_0,\dotsc,X_N}(s_0,\dotsc,s_N) 
  &= K_{\xi_n}(\rho_{n,n}) + \sum_{m=n+1}^N K_{\xi_m}(\tau_m(s_m,\dotsc,s_N))
  \notag\\&\quad
   + K_{X_0,\dotsc,X_{n-1}}\left( \rho_{n,0}+g_{n,0}(\rho_{n,n}), \dotsc, \rho_{n,n-1}+g_{n,n-1}(\rho_{n,n}) \right) 
  .
  \label{KX0XNrho}
\end{align}
Comparing \eqref{tauRecursion} and \eqref{rhonmFormula}, we see that 
\begin{equation}\label{rhonntaun}
  \rho_{n,n}= s_n+\sum_{k=n+1}^N g_{k,n}(\tau_k(s_k,\dotsc,s_N)) = \tau_n(s_n,\dotsc,s_N), 
\end{equation}
so the first term in the right-hand side of \eqref{KX0XNrho} can be merged into the summation as the $m=n$ term.
Similarly
\begin{equation}\label{rhonmRecursion}
  \rho_{n,m} + g_{n,m}(\tau_n(s_n,\dotsc,s_N)) = s_m + \sum_{k=n}^N g_{k,m}(\tau_k(s_{k+1},\dotsc,s_N)) = \rho_{n-1,m},
\end{equation}
so that \eqref{KX0XNrho} reduces to
\begin{equation}
  K_{X_0,\dotsc,X_N}(s_0,\dotsc,s_N) 
  =  \sum_{m=n}^N K_{\xi_m}(\tau_m(s_m,\dotsc,s_N)) 
  + K_{X_0,\dotsc,X_{n-1}}\left( \rho_{n-1,0}, \dotsc, \rho_{n-1,n-1} \right)
  ,
\end{equation}
which is \eqref{KX0XNInductive} with $n$ replaced by $n-1$.
By induction, \eqref{KX0XNInductive} holds for all $0\leq n<N$.
Applying \eqref{rhonntaun} with $n=0$ gives $\rho_{0,0}=\tau_0(s_0,\dotsc,s_N)$, so \eqref{KX0XNInductive} with $n=0$ reduces to \eqref{KX0XNFormula}.
\end{proof}

\subsection{First proof of \refthm{SamplePathProbsFactor}}

Henceforth, fix $x_0$ and $x_1,\dotsc,x_N$, and write $\vec{x}$ for the column vector of dimension $D=\sum_{n=1}^N d_n$ formed by concatenating the column vectors $x_1,\dotsc,x_N$.
For notational convenience, we will abbreviate $Q_1(x_0),\dotsc,Q_N(x_0,\dotsc,x_{N-1})$ as $Q_1,\dotsc,Q_N$.

In \refdefn{RecComp}, the conditional distribution of $(X_1,\dotsc,X_N)$ is specified recursively in terms of $X_0$, which has an arbitrary starting distribution.
Conditioning on $X_0=x_0$ is therefore equivalent to setting $X_0$ to be the constant random variable with value $x_0$ and CGF 
\begin{equation}\label{X0Choice}
  K_{X_0}(s_0)=s_0 x_0
  .
\end{equation}
We assume this choice of $X_0$ henceforth.
Then
\begin{equation}\label{KX1XNKX0XN}
  K_{X_1,\dotsc,X_N\,\vert\,X_0=x_0}(s_1,\dotsc,s_N) = K_{X_0,X_1,\dotsc,X_N}(0,s_1,\dotsc,s_N)
\end{equation}
and the left-hand side of \eqref{SamplePathProbsAsProduct} is the saddlepoint approximation arising from this CGF.
On the other hand, the product in the right-hand side of \eqref{SamplePathProbsAsProduct} can be interpreted as a multivariate saddlepoint approximation
\begin{equation}\label{prodhatfXgivenhatQ}
  \prod_{n=1}^N \hat{f}_{X_n \,\vert\, X_0=x_0,\dotsc,X_{n-1}=x_{n-1}}(x_n) = \hat{f}_{Q_1,\dotsc,Q_N}(x_1,\dotsc,x_N)
\end{equation}
where $Q_1,\dotsc,Q_N$ are independent and have multivariate CGF
\begin{equation}
  K_{Q_1,\dotsc,Q_N}(s_1,\dotsc,s_N) = \sum_{n=1}^N K_{Q_n}(s_n).
\end{equation}
To see this, note that the Hessian matrix $K''_{Q_1,\dotsc,Q_N}(s_1,\dotsc,s_N)$ is block-diagonal with diagonal blocks $K_{Q_n}''(s_n)$, so its determinant factors as a product of the determinants of its diagonal blocks.
Similarly, the gradient $K'_{Q_1,\dotsc,Q_N}(s_1,\dotsc,s_N)$ is the concatenation of the gradients $K_{Q_n}'(s_n)$, so the multivariate saddlepoint equation $K'_{Q_1,\dotsc,Q_N}(\hat{s}_1,\dotsc,\hat{s}_N)=\vec{x}$ is equivalent to the system of $N$ separate saddlepoint equations $K_{Q_n}'(s_n)=x_n$ for $n=1,\dotsc,N$.

The proof of \refthm{SamplePathProbsFactor} will proceed by relating the CGFs arising in these two multivariate saddlepoint approximations.
We begin with additional definitions to shorten and unify notation.

Since we have taken $X_0$ to be deterministic, its effects can be merged with the random variables $\xi_n$ by defining
\begin{equation}\label{tildexinFormula}
  \tilde{\xi}_n=Q_n(x_0,0,\dotsc,0)=\xi_n + \sum_{i=1}^{d_0} Z_{n,0,i}(x_{0,i}) \quad\text{for }n=1,\dotsc,N
  .
\end{equation}
Note that our independence assumptions imply that $\tilde{\xi}_1,\dotsc,\tilde{\xi}_N$ are independent random variables.
By \eqref{KQnFormula},
\begin{align}
  K_{\tilde{\xi}_1,\dotsc,\tilde{\xi}_N}(s_1,\dotsc,s_N) 
  = \sum_{n=1}^N K_{Q_n(x_0,0,\dotsc,0)}(s_n) 
  = \sum_{n=1}^N \left( \big. K_{\xi_n}(s_n) + g_{n,0}(s_n)x_0 \right)
  .
  \label{KtildexiFormula}
\end{align}
Also define
\begin{equation}\label{taunTFormula}
  T(s_1,\dotsc,s_N) = \mat{\tau_1(s_1,\dotsc,s_N) & \tau_2(s_2,\dotsc,s_N) & \dotsb & \tau_N(s_N)}
  ,
\end{equation}
i.e., $T$ is a mapping from $D$-dimensional row vectors to $D$-dimensional row vectors (in block form) whose $m^\text{th}$ block is given by the function $\tau_m(s_m,\dotsc,s_N)$ defined in \reflemma{KX0XN}.
Likewise, define $G(s_1,\dotsc,s_N)$ to be the $D$-dimensional row vector whose $m^\text{th}$ block $G_m(s_1,\dotsc,s_N)$ is the $d_m$-dimensional row vector
\begin{equation}\label{GmFormula}
  G_m(s_1,\dotsc,s_N) = \sum_{n=m+1}^N g_{n,m}(s_n), \qquad G_N(s_1,\dotsc,s_N)=0
  .
\end{equation}

With these definitions, 
\begin{align}
  &K_{X_1,\dotsc,X_N\,\vert\,X_0=x_0}(s_1,\dotsc,s_N) 
  \label{KXgivenX0KxiT}
  \\&\quad
  = K_{X_0}(\tau_0(0,s_1,\dotsc,s_N)) + \sum_{n=1}^N K_{\xi_n}(\tau_n(s_n,\dotsc,s_N)) 
  &&\text{by \eqref{KX1XNKX0XN} and \reflemma{KX0XN}}
  \notag\\&\quad
  = \tau_0(0,s_1,\dotsc,s_N)x_0 + \sum_{n=1}^N K_{\xi_n}(\tau_n(s_n,\dotsc,s_N))
  &&\text{by \eqref{X0Choice}}
  \notag\\&\quad
  = \sum_{n=1}^N \left( \big. K_{\xi_n}(\tau_n(s_n,\dotsc,s_N)) + g_{n,0}(\tau_n(s_n,\dotsc,s_N))x_0 \right)
  &&\text{by \eqref{tauRecursion} for $m=0$}
  \notag\\&\quad
  = K_{\tilde{\xi}_1,\dotsc,\tilde{\xi}_N} \circ T(s_1,\dotsc,s_N)
  &&\text{by \eqref{KtildexiFormula}--\eqref{taunTFormula}}
  \notag
\end{align}
whereas by independence
\begin{align}
  \label{KQ1QNFormula}
  &K_{Q_1,\dotsc,Q_N}(s_1,\dotsc,s_N) = \sum_{n=1}^N K_{Q_n}(s_n)
  \\&\quad
  = \sum_{n=1}^N \left( K_{\xi_n}(s_n) + g_{n,0}(s_n)x_0 + \sum_{m=1}^{n-1} g_{n,m}(s_n)x_m \right)
  &&\text{by \eqref{KQnFormula}}
  \notag\\&\quad
  = \sum_{n=1}^N \left( K_{\xi_n}(s_n) + g_{n,0}(s_n)x_0 \right) + \sum_{m=1}^N \sum_{n=m+1}^N g_{n,m}(s_n)x_m
  &&\text{by interchanging sums}
  \notag\\&\quad
  = K_{\tilde{\xi}_1,\dotsc,\tilde{\xi}_N}(s_1,\dotsc,s_N) + G(s_1,\dotsc,s_N)\vec{x}
  &&\text{by \eqref{KtildexiFormula} and \eqref{GmFormula}}
  \notag
  .
\end{align}
Moreover \eqref{tauRecursion}, the recursive definition of the functions $\tau_1,\dotsc,\tau_N$, can be expressed as the vector identity
\begin{equation}\label{TGIdentity}
  T(s_1,\dotsc,s_N) = \mat{s_1 & \dotsb & s_N} + G\circ T(s_1,\dotsc,s_N)
  .
\end{equation}

\begin{lemma}\lblemma{FunctionsMappedByT}
  The functions 
  \begin{equation}\label{TwoFunctions}
    \begin{aligned}
      (s_1,\dotsc,s_N) &\mapsto K_{X_1,\dotsc,X_N\,\vert\,X_0=x_0}(s_1,\dotsc,s_N)-\sum_{n=1}^N s_n x_n 
      \\
      \text{and}\qquad (\tau_1,\dotsc,\tau_N) &\mapsto K_{Q_1,\dotsc,Q_N}(\tau_1,\dotsc,\tau_N) - \sum_{n=1}^N \tau_n x_n
    \end{aligned}
  \end{equation}
  are identical under the change of variables $\mat{\tau_1 & \dotsb & \tau_N} = T(s_1,\dotsc,s_N)$.
\end{lemma}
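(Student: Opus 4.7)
The plan is to verify the identity by direct substitution, using the three equations already set up immediately before the lemma: namely \eqref{KXgivenX0KxiT}, \eqref{KQ1QNFormula}, and \eqref{TGIdentity}. These together bridge the two CGFs in \eqref{TwoFunctions} through the common object $K_{\tilde{\xi}_1,\dotsc,\tilde{\xi}_N}$, and the vector identity $T = s + G\circ T$ will absorb the difference between the two linear terms.

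More concretely, the first step is to compose the second function in \eqref{TwoFunctions} with $T$. Using \eqref{KQ1QNFormula} at $\tau = T(s_1,\dotsc,s_N)$ gives
\begin{equation*}
  K_{Q_1,\dotsc,Q_N}(T(s_1,\dotsc,s_N)) = K_{\tilde{\xi}_1,\dotsc,\tilde{\xi}_N}(T(s_1,\dotsc,s_N)) + G(T(s_1,\dotsc,s_N))\,\vec{x}
  ,
\end{equation*}
and by \eqref{KXgivenX0KxiT} the first term on the right is exactly $K_{X_1,\dotsc,X_N\,\vert\,X_0=x_0}(s_1,\dotsc,s_N)$. So after composition with $T$, the second function in \eqref{TwoFunctions} equals
\begin{equation*}
  K_{X_1,\dotsc,X_N\,\vert\,X_0=x_0}(s_1,\dotsc,s_N) + G(T(s_1,\dotsc,s_N))\,\vec{x} - \sum_{n=1}^N \tau_n(s_n,\dotsc,s_N)\, x_n
  .
\end{equation*}

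The second step is to reconcile the remaining linear terms with those in the first function of \eqref{TwoFunctions}. The identity \eqref{TGIdentity} rearranges as $T(s_1,\dotsc,s_N) - G(T(s_1,\dotsc,s_N)) = (s_1,\dotsc,s_N)$ blockwise, so pairing each block against $x_n$ and summing yields
\begin{equation*}
  \sum_{n=1}^N \tau_n(s_n,\dotsc,s_N)\, x_n - G(T(s_1,\dotsc,s_N))\,\vec{x} = \sum_{n=1}^N s_n x_n
  .
\end{equation*}
Substituting this back into the previous display collapses the expression to $K_{X_1,\dotsc,X_N\,\vert\,X_0=x_0}(s_1,\dotsc,s_N) - \sum_n s_n x_n$, which is the first function in \eqref{TwoFunctions}.

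There is no real obstacle here — this is essentially a bookkeeping computation, since all the analytic content has already been packaged into \eqref{KXgivenX0KxiT}, \eqref{KQ1QNFormula}, and \eqref{TGIdentity}. The only mild annoyance is keeping the block-vector conventions straight: $T$ and $G$ are $D$-dimensional row vectors built from blocks of sizes $d_1,\dotsc,d_N$, $\vec{x}$ is the corresponding concatenated column vector, and the pairing $G(T(s))\,\vec{x}$ must be read blockwise so that its $n^\text{th}$ contribution is $G_n(T(s))\,x_n$ in order to match the sum $\sum_n \tau_n x_n$ termwise. Once that convention is fixed, the two steps above give the result immediately.
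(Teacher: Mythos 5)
Your proof is correct and takes essentially the same approach as the paper's: both verify the identity by direct substitution using \eqref{KXgivenX0KxiT}, \eqref{KQ1QNFormula}, and \eqref{TGIdentity}, reconciling the two CGFs through the common object $K_{\tilde{\xi}_1,\dotsc,\tilde{\xi}_N}$ and using $T - G\circ T = (s_1,\dotsc,s_N)$ to absorb the difference in the linear terms. The only cosmetic difference is direction: the paper starts from the first function in \eqref{TwoFunctions} and transforms it into the second composed with $T$, whereas you run the same chain of substitutions in reverse.
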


\begin{proof}
By \eqref{KXgivenX0KxiT} and \eqref{TGIdentity},
\begin{align}
  &K_{X_1,\dotsc,X_N\,\vert\,X_0=x_0}(s_1,\dotsc,s_N)-\sum_{n=1}^N s_n x_n 
  \notag\\&\quad
  = K_{\tilde{\xi}_1,\dotsc,\tilde{\xi}_N} \circ T(s_1,\dotsc,s_N) - \mat{s_1 & \dotsb & s_N} \vec{x}
  &&\text{by \eqref{KXgivenX0KxiT}}
  \notag\\&\quad
  = K_{\tilde{\xi}_1,\dotsc,\tilde{\xi}_N} \circ T(s_1,\dotsc,s_N) - \left( \big. T(s_1,\dotsc,s_N) - G\circ T(s_1,\dotsc,s_N) \right) \vec{x}
  &&\text{by \eqref{TGIdentity}}
  \notag\\&\quad
  = \bigl( K_{\tilde{\xi}_1,\dotsc,\tilde{\xi}_N} \circ T(s_1,\dotsc,s_N) + G\circ T(s_1,\dotsc,s_N)\vec{x} \bigr) - T(s_1,\dotsc,s_N) \vec{x}
  \notag\\&\quad
  = K_{Q_1,\dotsc,Q_N}\circ T(s_1,\dotsc,s_N) - T(s_1,\dotsc,s_N) \vec{x}
  &&\text{by \eqref{KQ1QNFormula}.}
  \qedhere
\end{align}
\end{proof}

\begin{proof}[First proof of \refthm{SamplePathProbsFactor}]
The mapping $T$ is smooth, and the identity \eqref{TGIdentity} shows that $T$ has a smooth inverse
\begin{equation}
  T^{-1}(\tau_1,\dotsc,\tau_N) = \mat{\tau_1 & \dotsb & \tau_N} - G(\tau_1, \dotsc, \tau_N)
  .
\end{equation}
In particular, $T$ and $T^{-1}$ have non-singular Jacobians.

Therefore, using \reflemma{FunctionsMappedByT}, the functions $(s_1,\dotsc,s_N) \mapsto K_{X_1,\dotsc,X_N\,\vert\,X_0=x_0}(s_1,\dotsc,s_N)-\sum_{n=1}^N s_n x_n$ and $(\tau_1,\dotsc,\tau_N) \mapsto K_{Q_1,\dotsc,Q_N}(\tau_1,\dotsc,\tau_N) - \sum_{n=1}^N \tau_n x_n$ from \eqref{TwoFunctions} have the same critical points under the change of variables $\mat{\tau_1 & \dotsb & \tau_N} = T(s_1,\dotsc,s_N)$.
These critical points correspond precisely to the saddlepoints $\hat{s}_1^{(X)},\dotsc,\hat{s}_N^{(X)}$ and $\hat{\tau}_1^{(Q)},\dotsc,\hat{\tau}_N^{(Q)}$ in the respective saddlepoint approximations.
Hence, for a given vector $\vec{x}$, whether we can solve the saddlepoint equations, and whether the solution is unique, is the same for both saddlepoint approximations.

The functions from \eqref{TwoFunctions} have Hessian matrices that reduce to $K''_{X_1,\dotsc,X_N\,\vert\,X_0=x_0}(s_1,\dotsc,s_N)$ and $K''_{Q_1,\dotsc,Q_N}(\tau_1,\dotsc,\tau_N)$, respectively.
By \reflemma{FunctionsMappedByT}, at their respective critical points, these Hessian matrices are related in terms of the Jacobian matrix $T'$, namely
\begin{multline}\label{HessianRelation}
  K''_{X_1,\dotsc,X_N\,\vert\,X_0=x_0}\bigl( \hat{s}_1^{(X)},\dotsc,\hat{s}_N^{(X)} \bigr) 
  \\
  = T'\bigl( \hat{s}_1^{(X)},\dotsc,\hat{s}_N^{(X)} \bigr) K''_{Q_1,\dotsc,Q_N}\bigl( \hat{\tau}_1^{(Q)},\dotsc,\hat{\tau}_N^{(Q)} \bigr) T'\bigl( \hat{s}_1^{(X)},\dotsc,\hat{s}_N^{(X)} \bigr)^T
\end{multline}
where ${}^T$ denotes transpose.

For any $\R^D$-valued random variable $R$, the Hessian $K''_R(s)$ is either singular for all $s$ in its domain (if $R$ is supported on a hyperplane of dimension $D-1$ or lower) or positive definite for all $s$ in its domain.
Therefore if one of the Hessians in \eqref{HessianRelation} is singular, then both the Hessians $K''_{X_1,\dotsc,X_N\,\vert\,X_0=x_0}(s_1,\dotsc,s_N)$ and $K''_{Q_1,\dotsc,Q_N}(\tau_1,\dotsc,\tau_N)$ are identically singular, and the corresponding saddlepoint approximations are both undefined (or both equal to $\infty$, depending on our definition in this degenerate case).

Otherwise, the functions in \eqref{TwoFunctions} are both strictly convex and hence yield unique saddlepoints $\hat{s}_1^{(X)},\dotsc,\hat{s}_N^{(X)}$ and $\hat{\tau}_1^{(Q)},\dotsc,\hat{\tau}_N^{(Q)}$.
By \reflemma{FunctionsMappedByT}, the numerators in the corresponding multivariate saddlepoint approximations are equal.
Moreover, by \eqref{HessianRelation} and the squareness of the Jacobian matrix $T'$, the denominators are related by
\begin{multline}
  \sqrt{\det\bigl( 2\pi K''_{X_1,\dotsc,X_N\,\vert\,X_0=x_0}( \hat{s}_1^{(X)},\dotsc,\hat{s}_N^{(X)} ) \bigr)}
  \\
  =
  \sqrt{\det\bigl( 2\pi K''_{Q_1,\dotsc,Q_N}( \hat{\tau}_1^{(Q)},\dotsc,\hat{\tau}_N^{(Q)} ) \bigr) \det\bigl( T'( \hat{s}_1^{(X)},\dotsc,\hat{s}_N^{(X)}) \bigr)^2}
  .
\end{multline}
By \eqref{taunTFormula} and \eqref{tauRecursion}, the block entries of $T$ have the form $\tau_n(s_n,\dotsc,s_N)=s_n+h_n(s_{n+1},\dotsc,s_N)$, where $h_n$ depends only on $s_m$ for $m>n$.
Consequently the Jacobian matrix $T'$ is lower-triangular with ones on the diagonal,
\newlength{\templen} \settowidth{\templen}{$I_{d_N\times d_N}$}
\begin{equation}
  T' = 
  \mat{I_{d_1\times d_1} & 0 & \dotsb & 0 & 0\vphantom{I_{d_N\times d_N}} \\
  \grad_{s_2} h_1 & I_{d_2\times d_2} & \makebox[\templen]{\smash{$\ddots$}} & 0 & 0\vphantom{I_{d_N\times d_N}} \\
  \smash{\vdots} & \smash{\vdots} & \makebox[\templen]{\smash{$\ddots$}} & 0 & 0\vphantom{I_{d_N\times d_N}}\\
  \grad_{s_{N-1}} h_1 & \grad_{s_{N-1}}h_2 & \dotsb & \makebox[\templen]{$I_{d_{N-1}\times d_{N-1}}$} & 0\\
  \grad_{s_N}h_1 & \grad_{s_N}h_2 & \dotsb & \grad_{s_N}h_{N-1} & \makebox[\templen]{$I_{d_N\times d_N}$}
  }
\end{equation}
in block form, where the diagonal blocks $I_{d_n\times d_n}$ denote identity matrices of the indicated sizes.
In particular, $\det(T')=1$, and therefore the denominators of the multivariate saddlepoint approximations $\hat{f}_{X_1,\dotsc,X_N\,\vert\,X_0=x_0}(x_1,\dotsc,x_N)$ and $\hat{f}_{Q_1,\dotsc,Q_N}(x_1,\dotsc,x_N)$ are also equal.
Along with \eqref{prodhatfXgivenhatQ}, this completes the proof.
\end{proof}

\subsection{Second proof of \refthm{SamplePathProbsFactor}}

The second proof interprets the saddlepoint approximation in terms of distributional operations.

\begin{definition}[Exponential tilting]\lbdefn{Tilting}
  Let $X$ be a random variable with values in $\R^d$ and let $s$ be a row vector of dimension $d$ such that $M_X(s)$ is defined.
  A random variable $X^{(s)}$ has the \emph{tilted} distribution (corresponding to $X$ and $s$) if
  \begin{equation}\label{TiltingEhFormula}
    \E(h(X^{(s)}))=\frac{\E(h(X)e^{sX})}{M_X(s)}
  \end{equation}
  for all functions $h\colon\R^d\to\R$ for which the latter expectation is defined.
  Equivalently, if we write $\mu_X$ for the measure defined by $\mu_X(A)=\P(X\in A)$, then the distribution of $X^{(s)}$ is specified by the Radon-Nikodym derivative
  \begin{equation}\label{TiltingRN}
    \frac{d\mu_{X^{(s)}}}{d\mu_X}(x) = \frac{e^{sx}}{M_X(s)} \quad\text{for $\mu_X$-almost-every $x$}
    .
  \end{equation}
\end{definition}
For instance, if $X$ is a continuous random variable with density function $f_X(x)$ with respect to Lebesgue measure on $\R^d$, then $X^{(s)}$ has density function $f_{X^{(s)}}(x) = f_X(x)e^{sx}/M_X(s)$.

Note that \refdefn{Tilting} deals with distributions, rather than the pointwise values of random variables.
Abusing notation slightly, we will ignore the distinction between random variables and their distributions under $\P$, and we will regard $X^{(s)}$ as a generic random variable having the specified distribution.

From \eqref{TiltingEhFormula} we can obtain the MGF and CGF of the tilted distribution:
\begin{equation}\label{TiltedGFs}
  M_{X^{(s)}}(\sigma) = \frac{M_X(s+\sigma)}{M_X(s)}, 
  \qquad 
  K_{X^{(s)}}(\sigma) = K_X(s+\sigma) - K_X(s)
  ,
\end{equation}
In particular, when $s$ is an interior point of the domain of $K_X$, we can differentiate w.r.t.\ $\sigma$ and set $\sigma=0$ to interpret the derivatives $K'(s),K''(s)$ as cumulants of the tilted distribution:
\begin{equation}\label{TiltedCumulants}
  K_X'(s) = \E\left( X^{(s)} \right), \qquad K_X''(s) = \Var\left( X^{(s)} \right)
  ,
\end{equation}
where in the multivariate case $\Var$ means the variance-covariance matrix.
Thus the denominator in the saddlepoint approximation \eqref{UnivariateSPA}/\eqref{MultivariateSPA} can be recognised as the normalising factor for a Gaussian random variable with the same variance-covariance matrix as the tilted distribution.
Moreover, the saddlepoint equation \eqref{UnivariateSE} can be expressed as the condition that the tilted distribution with parameter $\hat{s}=\hat{s}(x)$ should have mean $x$.

By construction, the tilted distribution $X^{(s)}$ is absolutely continuous with respect to the distribution of $X$.
Define the relative entropy, or Kullback-Leibler divergence, by
\begin{equation}\label{HDefinition}
  \HXsX = \int \frac{d\mu_{X^{(s)}}}{d\mu_X}\log\left( \frac{d\mu_{X^{(s)}}}{d\mu_X} \right) d\mu_X
  = \int \log\left( \frac{d\mu_{X^{(s)}}}{d\mu_X} \right) d\mu_{X^{(s)}}
  .
\end{equation}

\begin{lemma}[Tilted relative entropy]\lblemma{TiltedH}
  \begin{equation}\label{TiltedHFormula}
    \HXsX = sK_X'(s) - K_X(s)
    .
  \end{equation}
\end{lemma}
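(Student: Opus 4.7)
The plan is to compute the relative entropy directly from the Radon–Nikodym derivative supplied by \refdefn{Tilting}, then recognise the expectation that appears as a tilted cumulant.

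First I would use the second form of $\HXsX$ in \eqref{HDefinition}, which integrates against $\mu_{X^{(s)}}$ rather than $\mu_X$:
\begin{equation*}
\HXsX = \int \log\!\left(\frac{d\mu_{X^{(s)}}}{d\mu_X}(x)\right) d\mu_{X^{(s)}}(x).
\end{equation*}
Next I would substitute the explicit Radon–Nikodym derivative from \eqref{TiltingRN}, giving $\log\bigl(d\mu_{X^{(s)}}/d\mu_X\bigr)(x) = sx - \log M_X(s) = sx - K_X(s)$. Since $K_X(s)$ is a constant (as a function of $x$), the integral splits into
\begin{equation*}
\HXsX = s \int x \, d\mu_{X^{(s)}}(x) - K_X(s) = s\,\E\bigl(X^{(s)}\bigr) - K_X(s).
\end{equation*}
Finally I would invoke the first identity in \eqref{TiltedCumulants}, namely $\E(X^{(s)}) = K_X'(s)$, to conclude $\HXsX = sK_X'(s) - K_X(s)$, as claimed.

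There is essentially no obstacle: the result is an immediate three-line unwinding of the definitions, and the only minor issue to mention is the matrix/vector convention — with $s$ a row vector and $X^{(s)}$ a column vector, the product $sx$ is a scalar and $s\E(X^{(s)}) = sK_X'(s)$ makes sense under the paper's convention for $K_X'$ as a column vector. No integrability subtlety arises because $s$ is assumed to lie in the domain of $M_X$, so $\E(X^{(s)})$ and the integral defining $\HXsX$ are finite (or at worst $+\infty$, consistent with the formula on the right-hand side).
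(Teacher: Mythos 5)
Your proof is correct and follows essentially the same route as the paper: substitute the Radon--Nikodym derivative from \eqref{TiltingRN} into the second integral form of \eqref{HDefinition}, obtain $s\E(X^{(s)})-K_X(s)$, and apply \eqref{TiltedCumulants}. The only small refinement worth noting is that \eqref{TiltedCumulants} requires $s$ to be an \emph{interior} point of the domain of $K_X$ (not merely in the domain of $M_X$), as the paper states just above that display.
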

\begin{proof}
By \eqref{TiltingRN} we have $\log\left( \frac{d\mu_{X^{(s)}}}{d\mu_X} \right) = \log\left( \frac{e^{sx}}{M_X(s)} \right) = sx-K_X(s)$.
According to \eqref{HDefinition}, the relative entropy is the expected value of this quantity when $x$ is replaced by $X^{(s)}$:
\begin{align}
  \HXsX
  &=\E\left( sX^{(s)}-K_X(s) \right)
  =s\E\left( X^{(s)} \right)-K_X(s)
\end{align}
which reduces to \eqref{TiltedHFormula} by \eqref{TiltedCumulants}.
\end{proof}

\begin{prop}\lbprop{SPAviaTilting}
  The saddlepoint approximation \eqref{UnivariateSE} and \eqref{UnivariateSPA}/\eqref{MultivariateSPA} can be expressed in terms of tilted distributions by
  \begin{equation}\label{SaddlepointViaTilting}
    \hat{f}_X(x) = \frac{\exp\left( -\bigrelent{X^{(\hat{s})}}{X} \right)}{\sqrt{\det(2\pi \Var(X^{(\hat{s})}))}}
    \quad\text{where $\hat{s}=\hat{s}(x)$ solves}\quad
    \E(X^{(\hat{s})}) = x
    .
  \end{equation}
\end{prop}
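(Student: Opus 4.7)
The plan is to translate each of the three ingredients of the saddlepoint approximation \eqref{UnivariateSPA}/\eqref{MultivariateSPA} — the saddlepoint equation, the denominator, and the exponential numerator — into the language of the tilted distribution $X^{(\hat{s})}$ using the identities already established in \eqref{TiltedCumulants} and \reflemma{TiltedH}. This is essentially a matter of substitution, so no genuinely difficult step is anticipated.

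First, I would rewrite the saddlepoint equation. The condition $K_X'(\hat{s}) = x$ from \eqref{UnivariateSE} (or \eqref{MultivariateSE} in the multivariate case) is, by the first identity in \eqref{TiltedCumulants}, exactly the condition $\E(X^{(\hat{s})}) = x$ appearing in \eqref{SaddlepointViaTilting}. Existence and uniqueness of $\hat{s}$ is therefore the same under either formulation.

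Next, I would handle the denominator: the second identity in \eqref{TiltedCumulants} gives $K_X''(\hat{s}) = \Var(X^{(\hat{s})})$, so $\sqrt{\det(2\pi K_X''(\hat{s}))} = \sqrt{\det(2\pi \Var(X^{(\hat{s})}))}$, matching the denominator in \eqref{SaddlepointViaTilting}.

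Finally, the numerator. By \reflemma{TiltedH},
\begin{equation*}
  \HXsX = \hat{s} K_X'(\hat{s}) - K_X(\hat{s})
\end{equation*}
at $s=\hat{s}$; substituting the saddlepoint equation $K_X'(\hat{s}) = x$ gives $\HXsX = \hat{s}x - K_X(\hat{s})$, so that $\exp(K_X(\hat{s}) - \hat{s}x) = \exp(-\HXsX)$, matching the numerator in \eqref{SaddlepointViaTilting}. The only subtlety to mention is precisely this use of the saddlepoint equation at the step where Lemma \ref{L:TiltedH} is applied — otherwise the identity would involve $K_X'(\hat{s})$ rather than $x$.
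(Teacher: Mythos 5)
Your proof is correct and follows essentially the same route as the paper: translate the saddlepoint equation and the Hessian via \eqref{TiltedCumulants}, and convert the numerator into a relative entropy by substituting $K_X'(\hat{s})=x$ into the formula of \reflemma{TiltedH}. The paper compresses this into a single sentence (focusing on the numerator step), but the substance is identical.
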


\begin{proof}
By construction, the saddlepoint $\hat{s}=\hat{s}(x)$ solves $K_X'(\hat{s})=x$, so $K_X(\hat{s})-\hat{s}x = K_X(\hat{s})-\hat{s}K_X'(\hat{s})$ reduces to a relative entropy as in \reflemma{TiltedH}.
\end{proof}

The next lemma shows that the saddlepoint equation $\E(X^{(\hat{s})}) = x$ uniquely determines the tilted distribution $X^{(\hat{s})}$, in those cases where a solution exists.

\begin{lemma}[Tilted distributions are determined by their means]\lblemma{MeansDetermineTilting}
  Let $X$ be a random variable with values in $\R^d$, and let $s,\tilde{s}$ be $d$-dimensional row vectors for which the tilted distributions $X^{(s)},X^{(\tilde{s})}$ exist and have finite means.
  If $\E(X^{(s)})=\E(X^{(\tilde{s})})$, then $X^{(s)} \equalsd X^{(\tilde{s})}$.
\end{lemma}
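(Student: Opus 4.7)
The plan is to apply the Gibbs-style non-negativity of relative entropy symmetrically between $X^{(s)}$ and $X^{(\tilde{s})}$ and then invoke its equality case. The key observation is that a convexity/entropy argument eliminates the need to analyse $K_X$ pointwise: the hypothesis on means will make the two opposing relative entropies sum to zero.

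First, I would note that by \eqref{TiltingRN} applied to both $s$ and $\tilde{s}$, the tilted measures $\mu_{X^{(s)}}$ and $\mu_{X^{(\tilde{s})}}$ are mutually absolutely continuous (since $e^{sx}/M_X(s)>0$ everywhere), with Radon-Nikodym derivative
\begin{equation*}
  \frac{d\mu_{X^{(s)}}}{d\mu_{X^{(\tilde{s})}}}(x) = \frac{M_X(\tilde{s})}{M_X(s)}\, e^{(s-\tilde{s})x}
  .
\end{equation*}
Repeating the derivation of \reflemma{TiltedH} with base measure $\mu_{X^{(\tilde{s})}}$ in place of $\mu_X$, I would obtain
\begin{equation*}
  \relent{X^{(s)}}{X^{(\tilde{s})}} = (s-\tilde{s})\,\E\bigl(X^{(s)}\bigr) + K_X(\tilde{s}) - K_X(s)
  ,
\end{equation*}
and by interchanging the roles of $s$ and $\tilde{s}$,
\begin{equation*}
  \relent{X^{(\tilde{s})}}{X^{(s)}} = (\tilde{s}-s)\,\E\bigl(X^{(\tilde{s})}\bigr) + K_X(s) - K_X(\tilde{s})
  .
\end{equation*}
Both expressions are finite under the standing hypotheses (finite MGFs and finite means).

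Adding the two identities and using $\E(X^{(s)}) = \E(X^{(\tilde{s})})$, every term on the right cancels, yielding $\relent{X^{(s)}}{X^{(\tilde{s})}} + \relent{X^{(\tilde{s})}}{X^{(s)}} = 0$. Each summand is non-negative by Gibbs' inequality (Jensen applied to the convex function $u\mapsto u\log u$), so both must vanish. Since the relative entropy of two mutually absolutely continuous probability measures is zero only when the measures coincide, I conclude $X^{(s)} \equalsd X^{(\tilde{s})}$. The main obstacle is essentially bookkeeping---verifying that both relative entropies are genuinely finite (guaranteed by the mutual absolute continuity and by the assumed finiteness of the means, which makes the affine integrand absolutely integrable) and invoking the sharpness of Gibbs' inequality; once the symmetric calculation is set up, the conclusion is immediate.
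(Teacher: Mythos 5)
Your proof is correct, and it takes a genuinely different route from the paper's. The paper restricts attention to the line segment between $s$ and $\tilde{s}$: it views $\sigma\mapsto K_X(s+\sigma(\tilde{s}-s))-K_X(s)$ as the CGF of the scalar random variable $Y=(\tilde{s}-s)X^{(s)}$, argues that this 1D CGF is strictly convex unless $Y$ is a.s.\ constant, and in the strictly convex case observes that the directional tilted mean $(\tilde{s}-s)\E(X^{(s+\sigma(\tilde{s}-s))})$ is strictly increasing in $\sigma$, contradicting equal means at $\sigma=0$ and $\sigma=1$; the degenerate case ($X$ supported on a hyperplane $(\tilde{s}-s)x=c$) is handled directly. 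You instead exploit the mutual absolute continuity of the two tilted measures, compute the two opposing relative entropies via the affine log-Radon-Nikodym derivative (a direct analogue of the paper's own Lemma~\ref{L:TiltedH}), add them, cancel using the equal-means hypothesis, and invoke non-negativity of relative entropy together with its equality case. Your argument is more symmetric and avoids the two-case split, and it fits nicely with the tilting/relative-entropy machinery the paper develops for its second proof of the main theorem; the paper's version is slightly more elementary in that it needs only convexity of the CGF, whereas yours additionally uses the sharpness of Gibbs' inequality (strict convexity of $u\mapsto u\log u$). Your remarks on finiteness and absolute integrability are the right things to check and are correctly disposed of by the finite-mean hypothesis, since the log-density is affine.
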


\begin{proof}
The hypotheses imply that $K_X(s),K_X(\tilde{s})$ are defined.
From \eqref{TiltedGFs} we find that $\sigma\mapsto K_X(s+\sigma(\tilde{s}-s))-K_X(s)$ is the CGF for the scalar random variable $Y=(\tilde{s}-s)X^{(s)}$, and by convexity this CGF is defined at least for $\sigma\in[0,1]$ and analytic for $\sigma\in(0,1)$.
If $Y$ is non-constant, then $Y^{(\sigma)}$ is also non-constant and has strictly positive variance for all $\sigma\in(0,1)$.
Then $K_Y$ is strictly convex in $(0,1)$ and $K'_Y(\sigma)=(\tilde{s}-s)\E(X^{(s+\sigma(\tilde{s}-s))})$ is strictly increasing in $\sigma\in(0,1)$, and taking $\sigma\decreasesto 0,\sigma\increasesto 1$ shows that $X^{(s)},X^{(\tilde{s})}$ cannot have equal means.
Conversely, if $Y$ is identically constant, then $X^{(s)}$ and hence $X$ are supported in a hyperplane $(\tilde{s}-s)x=c$, and it follows easily that $X^{(s)} \equalsd X^{(\tilde{s})}$.
\end{proof}

The idea of the second proof of \refthm{SamplePathProbsFactor} is to relate the tilted distributions for $X_1,\dotsc,X_N$ and $Q_1,\dotsc,Q_N$, then apply \eqref{SaddlepointViaTilting}.
Key to this strategy is that, for every operation on random variables for which there is a generating function identity, there will be a corresponding distribution identity between their tilted distributions and a corresponding identity between their tilted relative entropies.
The most basic such identities show that tilting preserves independence and additive structure: if $Y_1,Y_2$ are independent and $X=Y_1+Y_2$, then
\begin{equation}\label{TiltingIndependentSum}
  X^{(s)} \equalsd Y_1^{(s)} + Y_2^{(s)}
  \quad\text{and}\quad
  \HXsX = \bigrelent{Y_1^{(s)}}{Y_1} + \bigrelent{Y_2^{(s)}}{Y_2}
\end{equation}
and the tilted joint distribution is given by
    \begin{equation}\label{TiltingIndependentPair}
  \begin{gathered}
    (Y_1,Y_2)^{(s_1, \, s_2)} \equalsd (Y_1^{(s_1)},Y_2^{(s_2)})
    \quad\text{and}\quad
    \\
    \bigrelent{(Y_1,Y_2)^{(s_1, \, s_2)}}{(Y_1,Y_2)} = \bigrelent{Y_1^{(s_1)}}{Y_1} + \bigrelent{Y_2^{(s_2)}}{Y_2}
    .
  \end{gathered}
\end{equation}
Here $Y_1^{(s_1)},Y_2^{(s_2)}$ are taken to be independent, and generally in what follows, different tilted distributions within the same expression are taken to be independent.
In addition, \eqref{TiltingIndependentSum}--\eqref{TiltingIndependentPair} are taken to mean that if the tilted distribution(s) on one side of the equality exist, then so do those on the other side and the equality in distribution holds.
Both \eqref{TiltingIndependentSum}--\eqref{TiltingIndependentPair} can be proved by showing that the two sides have the same CGFs, as in the proof of the more general assertion of \reflemma{TiltingQ} below.

For a process $(Z(t))$ satisfying condition \ref{item:RecCompDisc}, \ref{item:RecCompCts}, or \ref{item:RecCompLin} from \refdefn{RecComp}, \eqref{TiltingIndependentSum} shows that $Z(t+\tau)^{(s)}$ has the same distribution as a sum of independent copies of $Z(t)^{(s)}$ and $Z(\tau)^{(s)}$, for all applicable values $t,\tau$ from $\Z_+$, $\R_+$, or $\R$, respectively.
Thus the tilted distributions $Z(t)^{(s)}$ can be realised as the marginal distributions of a process satisfying the same condition \ref{item:RecCompDisc}, \ref{item:RecCompCts}, or \ref{item:RecCompLin}, and we denote a generic realisation of this process by $(Z^{(s)}(t))$ for $t$ in $\Z_+$, $\R_+$, or $\R$, respectively.

More generally, the following two lemmas show that tilting preserves the class of recursively compounded processes.

\begin{lemma}\lblemma{TiltingQ}
  Let $Q_n(x_0,\dotsc,x_{n-1})$ be as in \refdefn{RecComp}, fix a $d_n$-dimensional row vector $s_n$, and suppose that the tilted distributions $Q_n(x_0,\dotsc,x_{n-1})^{(s_n)}$ exist for all choices of $x_0,\dotsc,x_{n-1}$.
  Then the process $Q_n^{(s_n)}$ defined by
  \begin{equation}
    Q_n^{(s_n)}(x_0,\dotsc,x_{n-1}) = \xi_n^{(s_n)} + \sum_{m=0}^{n-1} \sum_{i=1}^{d_m} Z^{(s_n)}_{n,m,i}(x_{m,i})
  \end{equation}
  satisfies $Q_n^{(s_n)}(x_0,\dotsc,x_{n-1}) \equalsd Q^{(s_n)}(x_0,\dotsc,x_{n-1})$ for all choices of $x_0,\dotsc,x_{n-1}$, where $\xi_n^{(s_n)}$ and the processes $Z^{(s_n)}_{n,m,i}$ are taken to be independent with $Z^{(s_n)}_{n,m,i}(t) \equalsd Z_{n,m,i}(t)^{(s_n)}$, for all applicable values of $t$ as in \refdefn{RecComp}.
  Furthermore
  \begin{multline}\label{TiltedQRelEnt}
    \bigrelent{Q^{(s_n)}_n(x_0,\dotsc,x_{n-1})}{Q_n(x_0,\dotsc,x_{n-1})}
    = \bigrelent{\xi_n^{(s_n)}}{\xi_n} 
    + \sum_{m=0}^{n-1} \left[ s_n g'_{n,m}(s_n) - g_{n,m}(s_n) \right] x_m
  \end{multline}
  and the analogues for $Q^{(s_n)}_n$ of the row-vector-valued functions $g_{n,m}$ given by \eqref{gnmiFormula} are
  \begin{equation}\label{Tiltedg}
    \tilde{g}_{n,m}(\sigma_n) = g_{n,m}(s_n+\sigma_n) - g_{n,m}(s_n)
    .
  \end{equation}
\end{lemma}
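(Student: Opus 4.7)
The plan is to compute the CGF of $Q_n(x_0,\dotsc,x_{n-1})^{(s_n)}$ directly from \eqref{TiltedGFs} and \eqref{KQnFormula}, and observe that it matches both the claimed recursive formula for $Q_n^{(s_n)}(x_0,\dotsc,x_{n-1})$ and the formula \eqref{Tiltedg} for the tilted analogues of the $g_{n,m}$. The relative entropy formula then follows from an immediate application of \reflemma{TiltedH}. Throughout, the linearity of CGFs in $t$ proved in \eqref{KZtKZ1}, which uniformly covers all three cases \refitem{RecCompDisc}--\refitem{RecCompLin} of \refdefn{RecComp}, does all the heavy lifting.

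Concretely, first I would apply \eqref{TiltedGFs} to write
\begin{equation*}
K_{Q_n(x_0,\dotsc,x_{n-1})^{(s_n)}}(\sigma_n) = K_{Q_n(x_0,\dotsc,x_{n-1})}(s_n+\sigma_n) - K_{Q_n(x_0,\dotsc,x_{n-1})}(s_n),
\end{equation*}
then substitute \eqref{KQnFormula} on the right-hand side to split this into a $\xi_n$ contribution $K_{\xi_n}(s_n+\sigma_n) - K_{\xi_n}(s_n) = K_{\xi_n^{(s_n)}}(\sigma_n)$ plus contributions $[g_{n,m}(s_n+\sigma_n) - g_{n,m}(s_n)] x_m$ from each $Z_{n,m,i}$-term. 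Componentwise, \eqref{gnmiFormula} and \eqref{TiltedGFs} identify
\begin{equation*}
g_{n,m,i}(s_n+\sigma_n) - g_{n,m,i}(s_n) = K_{Z_{n,m,i}(1)^{(s_n)}}(\sigma_n) = K_{Z_{n,m,i}^{(s_n)}(1)}(\sigma_n),
\end{equation*}
which is exactly the $i^{\text{th}}$ component of the claimed $\tilde g_{n,m}(\sigma_n)$ and establishes \eqref{Tiltedg}. On the other hand, the CGF of $\xi_n^{(s_n)} + \sum_{m,i} Z_{n,m,i}^{(s_n)}(x_{m,i})$, computed from the independence assumption on the tilted pieces and the analogue of \eqref{KZtKZ1}--\eqref{KQnFormula} for the tilted process, produces the same expression $K_{\xi_n^{(s_n)}}(\sigma_n) + \sum_{m=0}^{n-1}\tilde g_{n,m}(\sigma_n) x_m$. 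Equality of CGFs on a neighbourhood of the origin gives equality in distribution.

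For the relative entropy \eqref{TiltedQRelEnt}, I would apply \reflemma{TiltedH} to $Q_n(x_0,\dotsc,x_{n-1})$ at the tilt $s_n$, giving $\bigrelent{Q_n^{(s_n)}(x_0,\dotsc,x_{n-1})}{Q_n(x_0,\dotsc,x_{n-1})} = s_n K'_{Q_n(x_0,\dotsc,x_{n-1})}(s_n) - K_{Q_n(x_0,\dotsc,x_{n-1})}(s_n)$. Differentiating \eqref{KQnFormula} in $s_n$ and substituting back splits this into $[s_n K'_{\xi_n}(s_n)-K_{\xi_n}(s_n)] = \bigrelent{\xi_n^{(s_n)}}{\xi_n}$ (again by \reflemma{TiltedH}) plus $\sum_{m=0}^{n-1}[s_n g'_{n,m}(s_n) - g_{n,m}(s_n)] x_m$, which is \eqref{TiltedQRelEnt}.

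There is really no major obstacle: the content is bookkeeping, and the only potential pitfall is dimensional/notational, namely keeping track of which quantities are row versus column vectors so that the product $s_n g'_{n,m}(s_n)$ in \eqref{TiltedQRelEnt} is the intended scalar (a row $s_n$ times the transposed Jacobian of the row-valued $g_{n,m}$). This is governed by the conventions of \refsubsect{VectorsGradients} and does not affect the structure of the argument.
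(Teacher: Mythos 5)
Your proposal follows essentially the same route as the paper's proof: compute $K_{Q_n(x_0,\dotsc,x_{n-1})^{(s_n)}}$ via \eqref{TiltedGFs} and \eqref{KQnFormula}, identify the tilted analogues $\tilde g_{n,m}$ componentwise, match CGFs to conclude equality in distribution, and then obtain \eqref{TiltedQRelEnt} by applying \reflemma{TiltedH} to $Q_n$ and separately to $\xi_n$. The only step you elide, which the paper makes explicit at the outset, is the existence verification: that finiteness of $K_{Q_n(x_0,\dotsc,x_{n-1})}(s_n)$ for all $x$ forces finiteness of $K_{\xi_n}(s_n)$ and of each $K_{Z_{n,m,i}(1)}(s_n)$, so that the tilted constituents $\xi_n^{(s_n)}$ and $Z_{n,m,i}^{(s_n)}(t)$ (and hence $Q_n^{(s_n)}$) are well defined before you start manipulating their CGFs. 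That check is routine, but it belongs in the proof since the construction of $Q_n^{(s_n)}$ presupposes it.
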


\begin{proof}
The existence of the tilted distribution $Q_n(x_0,\dotsc,x_{n-1})^{(s_n)}$ is equivalent to the existence of $K_{Q_n(x_0,\dotsc,x_{n-1})}(s_n)$ as a finite real number.
By \eqref{KQnFormula}, it follows that $K_{\xi_n}(s_n)$ and $K_{Z_{n,m,i}(1)}(s_n)$ exist for all $m$ and $i$.
Since $K_{Z_{n,m,i}(t)}(s_n) = t K_{Z_{n,m,i}(1)}(s_n)$ for all applicable values of $t$ from \refdefn{RecComp}, it follows that all tilted distributions $\xi_n^{(s_n)},Z_{n,m,i}(x_{m,i})^{(s_n)}$ exist for all applicable choices of $x_0,\dotsc,x_n$.
As explained above, the tilted distributions $Z_{n,m,i}(t)^{(s_n)}$ may be taken as the marginal distributions of processes $Z^{(s_n)}_{n,m,i}(t)$, and thus $Q_n^{(s_n)}$ matches the construction of \refdefn{RecComp}.
Equation \eqref{Tiltedg} follows as the vector analogue of \eqref{TiltedGFs} since $\tilde{g}_{n,m}$ has entries
\begin{equation}
  \tilde{g}_{n,m,i}(\sigma_n) = K_{Z_{n,m,i}(1)^{(s_n)}}(\sigma_n) = K_{Z_{n,m,i}(1)}(s_n+\sigma_n)-K_{Z_{n,m,i}(1)}(s_n) = g_{n,m,i}(s_n+\sigma_n)-g_{n,m,i}(s_n).
\end{equation}
By \eqref{KQnFormula} and \eqref{TiltedGFs},
\begin{align}
  K_{Q_n(x_0,\dotsc,x_{n-1})^{(s_n)}}(\sigma_n) 
  &= K_{Q_n(x_0,\dotsc,x_{n-1})}(s_n+\sigma_n) - K_{Q_n(x_0,\dotsc,x_{n-1})}(s_n)
  \notag\\&
  = K_{\xi_n}(s_n+\sigma_n) - K_{\xi_n}(s_n) + \sum_{m=0}^{n-1}  [g_{n,m}(s_n+\sigma_n) - g_{n,m}(s_n)] x_m
  \notag\\&
  = K_{\xi_n^{(s_n)}}(\sigma_n) + \sum_{m=0}^{n-1} \tilde{g}_{n,m}(\sigma_n) x_m
  ,
\end{align}
which is the same as applying \eqref{KQnFormula} and \eqref{Tiltedg} to $Q^{(s_n)}_n(x_0,\dotsc,x_{n-1})$.
Hence $Q_n(x_0,\dotsc,x_{n-1})^{(s_n)}$ and $Q^{(s_n)}_n(x_0,\dotsc,x_{n-1})$ have the same CGF and hence the same distribution, as claimed.
Then \reflemma{TiltedH} and \eqref{KQnFormula} give
\begin{align}
  &\bigrelent{Q_n^{(s_n)}(x_0,\dotsc,x_{n-1})}{Q_n(x_0,\dotsc,x_{n-1})}
  \notag\\&\quad
  = s_n K'_{\xi_n}(s_n) - K_{\xi_n}(s_n) + \sum_{m=0}^{n-1} \left[ s_n g'_{n,m}(s_n) - g_{n,m}(s_n) \right] x_m
\end{align}
which reduces to \eqref{TiltedQRelEnt} by \reflemma{TiltedH} applied to $\xi_n$.
\end{proof}

Note that for fixed $s_n$ for which $Q^{(s_n)}_{n}(x_0,\dotsc,x_{n-1})$ exists, the right-hand side of \eqref{TiltedQRelEnt} is well-defined as a function of arbitrary $x_0,\dotsc,x_{n-1}$.
Abusing notation slightly, we will use $\bigrelent{Q^{(s_n)}_{n}(x_0,\dotsc,x_{n-1})}{Q_n(x_0,\dotsc,x_{n-1})}$ as a shorthand for the right-hand side of \eqref{TiltedQRelEnt}, even if requirements from \refdefn{RecComp}\ref{item:RecCompDisc} (for certain entries of $x_0,\dotsc,x_{n-1}$ to be integers) are violated.
Similarly, we will write
\begin{equation}\label{EVarTiltedQ}
  \begin{gathered}
    \E\left( Q^{(s_n)}_{n}(x_0,\dotsc,x_{n-1}) \right) = \E \left( \xi_n^{(s_n)} \right) + \sum_{m=0}^{n-1}\sum_{i=1}^{d_m} \E\left( Z_{n,m,i}(1)^{(s_n)} \right) x_{m,i}
    ,
    \\
    \Var\left( Q^{(s_n)}_{n}(x_0,\dotsc,x_{n-1}) \right) = \Var \left( \xi_n^{(s_n)} \right) + \sum_{m=0}^{n-1}\sum_{i=1}^{d_m} \Var\left( Z_{n,m,i}(1)^{(s_n)} \right) x_{m,i}
  \end{gathered}
\end{equation}
for all choices of $x_0,\dotsc,x_{n-1}$, with the same convention.

\begin{lemma}\lblemma{PreserveRecComp}
  If $(X_0,X_1,\dotsc,X_N)$ is a recursively compounded process, then the tilted joint distribution, which we abbreviate as $(\tilde{X}_0,\tilde{X}_1,\dotsc,\tilde{X}_N) \equalsd (X_0,\dotsc,X_N)^{(s_0,\dotsc,s_N)}$, is also a recursively compounded process.
  We may construct this process via
  \begin{equation}\label{TiltedRCPConstruction}
    \tilde{X}_0 \equalsd X_0^{(\tau_0(s_0,\dotsc,s_N))} 
    \quad\text{and}\quad
    \tilde{X}_n = Q_n^{(\tau_n(s_n,\dotsc,s_N))}(\tilde{X}_0,\dotsc,\tilde{X}_{n-1}) \quad\text{for }n=1,\dotsc,N
    ,
  \end{equation}
  where each tilted process $Q_n^{(\tau_n(s_n,\dotsc,s_N))}(x_0,\dotsc,x_{n-1})$ (over all choices of $x_0,\dotsc,x_{n-1}$) is chosen as in \reflemma{TiltingQ}, independently of $\tilde{X}_0,\dotsc,\tilde{X}_{n-1}$.
  Moreover
  \begin{multline}
    \label{TiltedRCPRelEnt}
    \bigrelent{(\tilde{X}_0,\dotsc,\tilde{X}_N)}{(X_0,\dotsc,X_N)} 
    \\
    = \bigrelent{X_0^{(\tau_0(s_0,\dotsc,s_N))}}{X_0} 
    + \sum_{n=1}^N \bigrelent{Q_n^{(\tau_n(s_n,\dotsc,s_N))}(\tilde{x}_0,\dotsc,\tilde{x}_{n-1})}{Q_n(\tilde{x}_0,\dotsc,\tilde{x}_{n-1})}
    \\
    \text{evaluated with }\tilde{x}_0=\E(\tilde{X}_0), \dotsc,\tilde{x}_{N-1}=\E(\tilde{X}_{N-1})
    .
  \end{multline}
\end{lemma}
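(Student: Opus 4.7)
The plan is to prove the distributional identity in \eqref{TiltedRCPConstruction} by computing the CGF of the constructed process in two different ways, and to deduce the relative entropy formula \eqref{TiltedRCPRelEnt} from the chain rule for relative entropy combined with the affine structure in \eqref{TiltedQRelEnt}.

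First, observe that the construction in \eqref{TiltedRCPConstruction} yields a recursively compounded process in the sense of \refdefn{RecComp}: the initial distribution is $X_0^{(\tau_0(s_0,\dotsc,s_N))}$, the immigration random variables are $\xi_n^{(\tau_n(s_n,\dotsc,s_N))}$, and the compounding processes are $Z_{n,m,i}^{(\tau_n(s_n,\dotsc,s_N))}$, each of which still satisfies case~\refitem{RecCompDisc}, \refitem{RecCompCts}, or \refitem{RecCompLin} of \refdefn{RecComp}, as noted in the discussion preceding \reflemma{TiltingQ}. Therefore \reflemma{KX0XN} applies to the constructed process and expresses $K_{\tilde{X}_0,\dotsc,\tilde{X}_N}(\sigma_0,\dotsc,\sigma_N)$ in terms of tilted analogues $\tilde{g}_{n,m}$ and $\tilde{\tau}_m$ defined via \eqref{Tiltedg} and the recursion \eqref{tauRecursion}. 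On the other hand, \eqref{TiltedGFs} identifies the CGF of $(X_0,\dotsc,X_N)^{(s_0,\dotsc,s_N)}$ as the difference $K_{X_0,\dotsc,X_N}(s_0+\sigma_0,\dotsc,s_N+\sigma_N) - K_{X_0,\dotsc,X_N}(s_0,\dotsc,s_N)$, to which \reflemma{KX0XN} is again applied.

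The key algebraic step is the backward recursion
\begin{equation*}
\tau_n(s_n+\sigma_n,\dotsc,s_N+\sigma_N) = \tau_n(s_n,\dotsc,s_N) + \tilde{\tau}_n(\sigma_n,\dotsc,\sigma_N), \qquad n=N,N-1,\dotsc,0,
\end{equation*}
which I would prove by induction downward from $n=N$, using \eqref{tauRecursion} together with \eqref{Tiltedg} to absorb the tilted shift. Combined with the tilted-CGF identities $K_{X_0^{(\tau_0)}}(\cdot) = K_{X_0}(\tau_0+\cdot) - K_{X_0}(\tau_0)$ and $K_{\xi_n^{(\tau_n)}}(\cdot) = K_{\xi_n}(\tau_n+\cdot) - K_{\xi_n}(\tau_n)$, this identity makes the two CGFs agree term by term, establishing the distributional claim.

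To derive \eqref{TiltedRCPRelEnt}, I would apply the chain rule for relative entropy
\begin{equation*}
\bigrelent{(\tilde{X}_0,\dotsc,\tilde{X}_N)}{(X_0,\dotsc,X_N)} = \bigrelent{\tilde{X}_0}{X_0} + \sum_{n=1}^N \E \bigrelent{Q_n^{(\tau_n(s_n,\dotsc,s_N))}(\tilde{X}_0,\dotsc,\tilde{X}_{n-1})}{Q_n(\tilde{X}_0,\dotsc,\tilde{X}_{n-1})},
\end{equation*}
using that, by construction, the conditional distribution of $\tilde{X}_n$ given $\tilde{X}_0,\dotsc,\tilde{X}_{n-1}$ is $Q_n^{(\tau_n(s_n,\dotsc,s_N))}(\tilde{X}_0,\dotsc,\tilde{X}_{n-1})$, while by \refdefn{RecComp} the conditional distribution of $X_n$ given $X_0,\dotsc,X_{n-1}$ is $Q_n(X_0,\dotsc,X_{n-1})$. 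By \eqref{TiltedQRelEnt} each summand is affine in $\tilde{X}_0,\dotsc,\tilde{X}_{n-1}$, so the expectation equals its value at $\tilde{x}_m = \E(\tilde{X}_m)$, producing \eqref{TiltedRCPRelEnt}. The main obstacle is the backward induction establishing the $\tau_n$-identity; though conceptually transparent, it requires essentially the same nested bookkeeping as the proof of \reflemma{KX0XN}.
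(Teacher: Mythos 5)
Your approach is correct, but it differs from the paper's in both halves of the argument, and the comparison is worth recording.

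For the distributional claim, the paper reduces to the case $N=1$ via an explicit CGF computation and then extends by backward induction, using a ``collapsing'' trick in which $(X_0,\dotsc,X_{n-1})$ is treated as a single block $\collapsedrcp{X}_0$ so that $(\collapsedrcp{X}_0,\collapsedrcp{X}_1)$ becomes a two-step recursively compounded process; the intermediate objects $\rho_{n,m}$ from \plaineqref{rhonmFormula} track the partially tilted parameters at each stage. You instead compare the two CGFs directly via \reflemma{KX0XN}, with the backward recursion
$\tau_n(s_n+\sigma_n,\dotsc,s_N+\sigma_N) = \tau_n(s_n,\dotsc,s_N) + \tilde{\tau}_n(\sigma_n,\dotsc,\sigma_N)$
as the single algebraic pivot. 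Both arguments have the same inductive depth, but your version isolates the additivity of $\tau$ under joint tilting as the essential fact, which is arguably cleaner; just note that the $\tilde{g}_{k,m}$ appearing in $\tilde{\tau}_n$ must be the $g$-functions of $Q_k^{(\tau_k(s_k,\dotsc,s_N))}$ rather than of $Q_k^{(s_k)}$ --- i.e.\ $\tilde{g}_{k,m}(\sigma)=g_{k,m}(\tau_k(s_k,\dotsc,s_N)+\sigma)-g_{k,m}(\tau_k(s_k,\dotsc,s_N))$ --- since the tilting parameter in the constructed process is $\tau_k$, not $s_k$. This is what makes your inductive step close, so it deserves to be stated explicitly.

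For the relative entropy formula, the paper again routes through the $N=1$ case, unpacking \plaineqref{TiltedRCPHN=1} via explicit cumulant identities and then iterating the collapsing argument. You instead invoke the chain rule for relative entropy, which the paper neither states nor proves; this shortens the argument considerably and makes the role of the affine structure in \plaineqref{TiltedQRelEnt} transparent, since the inner conditional relative entropies are affine in the conditioning variables and so their expectations are obtained simply by evaluating at the means $\tilde{x}_m=\E(\tilde{X}_m)$. The trade-off is that you are importing a standard but nontrivial fact from information theory; the paper's argument is longer but stays within the elementary toolkit already developed (tilted CGFs and \reflemma{TiltedH}). If you wanted to keep the paper fully self-contained, the chain rule would need a short proof, and at that point the two approaches would converge in length.
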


\begin{proof}
We begin with the case $N=1$.
Fix $(s_0,s_1)$.
To show the equality in distribution, it suffices to show that the tilted multivariate CGF given by \eqref{TiltedGFs} coincides with the CGF of the recursively compounded process $(\tilde{X}_0,\tilde{X}_1)$ defined by \eqref{TiltedRCPConstruction}.
The latter CGF, $K_{\tilde{X}_0,\tilde{X}_1}(\sigma_0,\sigma_1)$, can be computed by replacing the functions $K_{X_0}$, $K_{\xi_1}$, and $g_{1,0}$ from \reflemma{KX0XN} by the functions
\begin{equation}\label{TiltedX0xi1g10}
  \begin{aligned}
    K_{\tilde{X}_0}(\sigma_0) &= K_{X_0}(s_0+g_{1,0}(s_1)+\sigma_0)-K_{X_0}(s_0+g_{1,0}(s_1))
    ,
    \\
    K_{\tilde{\xi_1}}(\sigma_1) &= K_{\xi_1}(s_1+\sigma_1) - K_{\xi_1}(s_1)
    ,
    \\
    \tilde{g}_{1,0}(\sigma_1) &= g_{1,0}(s_1+\sigma_1) - g_{1,0}(s_1)
  \end{aligned}
\end{equation}
computed using \eqref{TiltedGFs} and \reflemma{TiltingQ}.
The functions $\tau_0,\tau_1$ from \reflemma{KX0XN} become
\begin{equation}
  \tilde{\tau}_1(\sigma_1) = \sigma_1
  , \quad
  \tilde{\tau}_0(\sigma_0,\sigma_1) = \sigma_0 + \tilde{g}_{1,0}(\sigma_1)
  .
\end{equation} 

On the other hand, applying \eqref{TiltedGFs} and \reflemma{KX0XN} to $(X_0,X_1)$, we compute
\begin{align}
  K_{(X_0,X_1)^{(s_0,s_1)}}(\sigma_0,\sigma_1) 
  &= 
  K_{X_0,X_1}(s_0+\sigma_0,s_1+\sigma_1) - K_{X_0,X_1}(s_0,s_1)
  \notag\\&
  = K_{X_0}(\tau_0(s_0+\sigma_0,s_1+\sigma_1)) - K_{X_0}(\tau_0(s_0,s_1)) 
  \notag\\&\quad
  + K_{\xi_1}(s_1+\sigma_1) - K_{\xi_1}(s_1)
  \notag\\&
  = K_{X_0}(s_0+\sigma_0+g_{1,0}(s_1+\sigma_1)) - K_{X_0}(s_0+g_{1,0}(s_1)) + K_{\tilde{\xi}_1}(\sigma_1)
  .
\end{align}
Substituting $g_{1,0}(s_1+\sigma_1)=g_{1,0}(s_1)+\tilde{g}_{1,0}(\sigma_1)$, this yields
\begin{align}
  K_{(X_0,X_1)^{(s_0,s_1)}}(\sigma_0,\sigma_1) 
  &= 
  K_{X_0}(s_0+\sigma_0+g_{1,0}(s_1)+\tilde{g}_{1,0}(\sigma_1)) - K_{X_0}(s_0+g_{1,0}(s_1)) + K_{\tilde{\xi}_1}(\sigma_1)
  \notag\\&
  = K_{\tilde{X}_0}(\sigma_0+\tilde{g}_{1,0}(\sigma_1)) + K_{\tilde{\xi}_1}(\sigma_1)
  \notag\\&
  = K_{\tilde{X}_0}(\tilde{\tau}_0(\sigma_0,\sigma_1)) + K_{\tilde{\xi}_1}(\tilde{\tau}_1(\sigma_1))
  ,
\end{align}
and by \reflemma{KX0XN} this matches with the CGF of the recursively compounded process $(\tilde{X}_0,\tilde{X}_1)$ given by \eqref{TiltedRCPConstruction}.
Hence $(X_0,X_1)^{(s_0,s_1)} \equalsd (\tilde{X}_0,\tilde{X}_1)$, as required.

For the tilted relative entropy, note from $K_{X_0,X_1}(s_0,s_1) = K_{X_0}(s_0 + g_{1,0}(s_1)) + K_{\xi_1}(s_1)$ that 
\begin{equation}
  \begin{aligned}
    \grad_{s_0}K_{X_0,X_1}(s_0,s_1) &= K_{X_0}'(s_0 + g_{1,0}(s_1))
    ,
    \\
    \grad_{s_1}K_{X_0,X_1}(s_0,s_1) &= g_{1,0}'(s_1)K_{X_0}'(s_0 + g_{1,0}(s_1))+K_{\xi_1}'(s_1)
    ,
  \end{aligned}
\end{equation}
and write 
\begin{equation}
  \tilde{x}_0=\E(\tilde{X}_0) = \E(X_0^{(s_0+g_{1,0}(s_1))}) =K'_{X_0}(s_0+g_{1,0}(s_1))
  , 
\end{equation}
where the final equality uses \eqref{TiltedCumulants}.
Then
\begin{align}
  &\bigrelent{(X_0,X_1)^{(s_0,s_1)}}{(X_0,X_1)} 
  \notag\\&\quad
  = s_0 \grad_{s_0}K_{X_0,X_1}(s_0,s_1) + s_1 \grad_{s_1}K_{X_0,X_1}(s_0,s_1) - K_{X_0,X_1}(s_0,s_1)
  \notag\\&\quad
  = s_0 K_{X_0}'(s_0 + g_{1,0}(s_1)) + s_1 g_{1,0}'(s_1)K_{X_0}'(s_0 + g_{1,0}(s_1))
  \notag\\&\qquad 
  +s_1 K_{\xi_1}'(s_1)- K_{X_0}(s_0 + g_{1,0}(s_1)) - K_{\xi_1}(s_1)
  \notag\\&\quad
  = \left( \Big. [s_0 + g_{1,0}(s_1)] K_{X_0}'(s_0 + g_{1,0}(s_1)) - g_{1,0}(s_1) \tilde{x}_0 \right) + s_1 g_{1,0}'(s_1)\tilde{x}_0
  \notag\\&\qquad 
   + s_1 K_{\xi_1}'(s_1) - K_{\xi_1}(s_1) - K_{X_0}(s_0 + g_{1,0}(s_1))
  \notag\\&\quad 
  = \bigrelent{X_0^{(s_0 + g_{1,0}(s_1))}}{X_0} + \bigrelent{\xi_1^{(s_1)}}{\xi_1} + [s_1 g_{1,0}'(s_1) - g_{1,0}(s_1)] \tilde{x}_0
  \notag\\&\quad
  = \bigrelent{X_0^{(s_0 + g_{1,0}(s_1))}}{X_0} + \bigrelent{Q_1^{(s_1)}(\tilde{x}_0)}{Q_1(\tilde{x}_0)}
  \label{TiltedRCPHN=1}
\end{align}
by \eqref{TiltedQRelEnt}. 
This reduces to \eqref{TiltedRCPRelEnt}, thus completing the proof of \reflemma{PreserveRecComp} for $N=1$.

For the remainder of the proof, fix $N>1$ and tilting parameters $(s_0,\dotsc,s_N)$.
We will proceed by induction in reverse time order, at each stage replacing the last tilted variable by one additional step of a (tilted) recursively compounded process.
Specifically, we will show by induction that for $n=N,N-1,\dotsc,1,0$, 
\begin{multline}\label{TiltingInduction}
  (X_0,\dotsc,X_N)^{(s_0,\dotsc,s_N)} \equalsd (\tilde{X}_{0;n},\dotsc,\tilde{X}_{N;n})
  \\
  \text{where }(\tilde{X}_{0;n}, \dotsc,\tilde{X}_{n;n}) \equalsd (X_0,\dotsc,X_n)^{(\rho_{n,0},\dotsc,\rho_{n,n})}
  \\
  \text{and }\tilde{X}_{k;n} = Q_k^{(\tau_k(s_k,\dotsc,s_N))}(\tilde{X}_{0,n},\dotsc,\tilde{X}_{k-1,n})\text{ for all }k>n
  ,
\end{multline}
where throughout \eqref{TiltingInduction} we assume that all realisations of the processes $Q_k^{(\tau_k(s_k,\dotsc,s_N))}$ are chosen independently of $\tilde{X}_{0;n},\tilde{X}_{k;n}$, and where $\rho_{n,m}=\rho_{n,m}(s_m,\dotsc,s_N)$ are the quantities from \eqref{rhonmFormula}.
Recalling that $\rho_{N,m}=s_m$ for all $m$, we see that \eqref{TiltingInduction} holds trivially in the base case $n=N$.

To advance the induction, suppose that $1<n\leq N$ and that \eqref{TiltingInduction} holds for $n$.
Note that a recursively compounded process can be ``collapsed'' to one of smaller length: if we define $\collapsedrcp{X}_0=(X_0,\dotsc,X_{n-1})$, $\collapsedrcp{X}_1=X_n$, then $(\collapsedrcp{X}_0,\collapsedrcp{X}_1)$ is itself a recursively compounded process, say $\collapsedrcp{X}_1=\collapsedrcp{Q}_1(\collapsedrcp{X}_0)$.
Using the notation from \refdefn{RecComp}, we have $\collapsedrcp{\xi}_1=\xi_n$, $\collapsedrcp{d}_0=d_0+\dotsb+d_{n-1}$, $\collapsedrcp{d}_1=d_n$, and the processes $(\collapsedrcp{Z}_{1,0,i})_{i=1,\dotsc,\collapsedrcp{d}_0}$ are the processes $(Z_{n,m,j})_{m=0,\dotsc,n-1, \; j=1,\dotsc,d_m}$ arranged in lexicographic order.
With these conventions, $\collapsedrcp{Q}_1(\collapsedrcp{X}_0)=Q_n(X_0,\dotsc,X_{n-1})$, and the function $\collapsedrcp{g}_{1,0}(\collapsedrcp{\sigma}_1)$ from applying \eqref{gnmiFormula} to $(\collapsedrcp{X}_0,\collapsedrcp{X}_1)$ is formed by concatenating the blocks $g_{n,m}(\sigma_n)$ for $m=0,\dotsc,n-1$.

Multivariate CGFs and tilted distributions are not affected by how we group vectors into blocks.
Therefore, if we write the corresponding multivariate generating function arguments in block form as $\collapsedrcp{\sigma}_0=\mat{\sigma_0 & \dotsb & \sigma_{n-1}}$ and $\collapsedrcp{\sigma}_1=\sigma_n$, where each $\sigma_m$ is a row vector of dimension $d_m$, then $(X_0,\dotsc,X_n)^{(\sigma_0,\dotsc,\sigma_n)} \equalsd (\collapsedrcp{X}_0,\collapsedrcp{X}_1)^{(\collapsedrcp{\sigma}_0,\collapsedrcp{\sigma}_1)}$.

By the results already proved for $N=1$,
\begin{align}
  (X_0,\dotsc,X_n)^{(\sigma_0,\dotsc,\sigma_n)} 
  &\equalsd 
  (\collapsedrcp{X}_0,\collapsedrcp{X}_1)^{(\collapsedrcp{\sigma}_0,\collapsedrcp{\sigma}_1)}
  \equalsd 
  (\collapsedrcp{Y}_0,\collapsedrcp{Y}_1)
  \notag\\&\qquad
  \text{where }
  \collapsedrcp{Y}_0 \equalsd \collapsedrcp{X}_0^{(\collapsedrcp{\sigma}_0+\collapsedrcp{g}_{1,0}(\collapsedrcp{\sigma}_1))}
  \text{ and }
  \collapsedrcp{Y}_1 \equalsd \collapsedrcp{Q}_1^{(\collapsedrcp{\sigma}_1)}(\collapsedrcp{Y}_0)
  .
\end{align}
Write $\collapsedrcp{Y}_1=Y_n,\collapsedrcp{Y}_0=(Y_0,\dotsc,Y_{n-1})$ in block form.
Then, by the block structure above, 
\begin{equation}\label{YTiltedBlockForm}
  (Y_0,\dotsc,Y_{n-1}) \equalsd (X_0,\dotsc,X_{n-1})^{(\sigma_0 + g_{n,0}(\sigma_n), \dotsc, \sigma_{n-1} + g_{n,n-1}(\sigma_n))} 
  ,\quad
  Y_n \equalsd Q_n^{(\sigma_n)}(Y_0,\dotsc,Y_{n-1})
  .
\end{equation}

Henceforth set $\sigma_m=\rho_{n,m}$ for $m=0,\dotsc,n$.
By \eqref{rhonntaun}--\eqref{rhonmRecursion}, the tilting parameters $\sigma_m+g_{n,m}(\sigma_n)$ from \eqref{YTiltedBlockForm} reduce to $\rho_{n-1,m}$, and moreover $\sigma_n=\rho_{n,n}=\tau_n(s_n,\dotsc,s_N)$.
Thus the construction \eqref{YTiltedBlockForm} of $(Y_0,\dotsc,Y_{n-1},Y_n)$ is equivalent to the construction from \eqref{TiltingInduction} (with $n$ replaced by $n-1$) of $(\tilde{X}_{0;n-1},\dotsc,\tilde{X}_{n-1;n-1},\tilde{X}_{n;n-1})$.
Therefore
\begin{equation}
  (\tilde{X}_{0;n},\dotsc,\tilde{X}_{n;n}) \equalsd (X_0,\dotsc,X_N)^{(s_0,\dotsc,s_N)} \equalsd (Y_0,\dotsc,Y_n) \equalsd (\tilde{X}_{0;n-1},\dotsc,\tilde{X}_{n;n-1})
  .
\end{equation}
That is, the processes $(\tilde{X}_{0;n},\dotsc,\tilde{X}_{N;n})$ and $(\tilde{X}_{0;n-1},\dotsc,\tilde{X}_{N;n-1})$ have the same joint distribution up to time $n$.
But for both processes, their distribution after time $n$ is determined by their joint distribution up to time $n$ and the same independent randomness via the $Q_k^{(\tau_k(s_k,\dotsc,s_N))}$'s for $k>n$.
Hence the two processes have the same joint distribution and
\begin{equation}
  (\tilde{X}_{0;n-1},\dotsc,\tilde{X}_{N;n-1}) \equalsd (\tilde{X}_{0;n},\dotsc,\tilde{X}_{N;n}) \equalsd (X_0,\dotsc,X_N)^{(s_0,\dotsc,s_N)}
  .
\end{equation}
This advances the induction.
Hence \eqref{TiltingInduction} holds for all $n=N,\dotsc,0$.
Taking $n=0$ verifies that the process $(\tilde{X}_0,\dotsc,\tilde{X}_N)$ defined in \eqref{TiltedRCPConstruction} matches the tilted distribution, as claimed.

A similar argument gives \eqref{TiltedRCPRelEnt}.
Maintaining previous notation, we apply \eqref{TiltedRCPHN=1} to $(\collapsedrcp{X}_0,\collapsedrcp{X}_1)$ and its tilting $(\collapsedrcp{Y}_0,\collapsedrcp{Y}_1)$, noting that $\tilde{x}_0$ from \eqref{TiltedRCPHN=1} is replaced by $\E(\collapsedrcp{Y}_0)=(\tilde{x}_0,\dotsc,\tilde{x}_{n-1})$.
Using \eqref{TiltingInduction} twice,
\begin{align}
  &\bigrelent{(\tilde{X}_0,\dotsc,\tilde{X}_n)}{(X_0,\dotsc,X_n)} 
  \\&\quad
  = \bigrelent{(X_0,\dotsc,X_n)^{(\rho_{n,0},\dotsc,\rho_{n,n})}}{(X_0,\dotsc,X_n)} 
  \notag\\&\quad
  = \bigrelent{(\collapsedrcp{Y}_0,\collapsedrcp{Y}_1)}{(\collapsedrcp{X}_0,\collapsedrcp{X}_1)}
  \notag\\&\quad
  = \bigrelent{\collapsedrcp{Y}_0}{\collapsedrcp{X}_0} + \bigrelent{\collapsedrcp{Q}_1^{(\rho_{n,n})}(\E(\collapsedrcp{Y}_0))}{\collapsedrcp{Q}_1(\E(\collapsedrcp{Y}_0))}
  \notag\\&\quad
  = \bigrelent{(\tilde{X}_0,\dotsc,\tilde{X}_{n-1})}{(X_0,\dotsc,X_n)} + \bigrelent{Q_n^{(\tau_n(s_n,\dotsc,s_N))}(\tilde{x}_0,\dotsc,\tilde{x}_{n-1})}{Q_n(\tilde{x}_0,\dotsc,\tilde{x}_{n-1})}
  \notag
  ,
\end{align}
and \eqref{TiltedRCPRelEnt} follows by induction.
\end{proof}

\begin{proof}[Proof of \refthm{SamplePathProbsFactor}]
For $N=1$ there is nothing to prove.
Similar to the argument in the previous proof, it suffices to consider the case $N=2$, since for $N>2$ we can set $\collapsedrcp{X}_1=(X_1,\dotsc,X_{N-1})$ and $\collapsedrcp{X}_2=X_N$ and apply an inductive argument.
Moreover, without loss of generality we may assume that $x_0=0$; if not, replace each $\xi_n$ by $\tilde{\xi}_n$ from \eqref{tildexinFormula}, as in the earlier proof of \refthm{SamplePathProbsFactor}.

Fix $x_1,x_2$.
Then we wish to compare the joint distributions of $(X_1,X_2)$ versus $(Y_1,Y_2)$, where we define
\begin{equation}
  X_1=Q_1(0)=\xi_1 = Y_1 
  \qquad\text{and}\qquad
  \begin{aligned}
    X_2 &= Q_2(0,X_1)
    ,
    \\
    Y_2 &= Q_2(0,x_1)
    .
  \end{aligned}
\end{equation}
Note that $Y_1,Y_2$ are independent.

With tilting parameters $(s_1,s_2)$ and $(\sigma_1,\sigma_2)$ to be chosen later, write 
\begin{equation}
  (\hat{X}_1,\hat{X}_2)\equalsd (X_1,X_2)^{(s_1,s_2)} 
  ,
  \qquad
  (\check{Y}_1,\check{Y}_2)\equalsd (Y_1,Y_2)^{(\sigma_1,\sigma_2)} 
\end{equation}
for the corresponding tilted joint distributions.
As in \eqref{TiltingIndependentPair}, the tilted random variables $\check{Y}_1,\check{Y}_2$ are independent and can be taken to satisfy 
\begin{equation}
  \check{Y}_1 = \check{Q}_1(0) \equalsd Q_1^{(\sigma_1)}(0), 
  \qquad 
  \check{Y}_2 = \check{Q}_2(0,x_1) \equalsd Q_2^{(\sigma_2)}(0,x_1)
  ,
\end{equation}
independently.
The tilting parameters $\sigma_1,\dotsc,\sigma_N$ can be varied separately, so the processes $\check{Q}_1,\check{Q}_2$, considered jointly, can realise any desired tiltings of the processes $Q_1,Q_2$.

On the other hand, by \reflemma{PreserveRecComp}, the tilted joint distribution $(\hat{X}_1,\hat{X}_2)$ is itself a recursively compounded process and can be taken to satisfy
\begin{equation}
  \hat{X}_1 = \hat{Q}_1(0) \equalsd Q_1^{(\tau_1(s_1,s_2))}(0),
  \qquad
  \hat{X}_2 = \hat{Q}_2(0,\hat{X}_1)
\end{equation}
where the process $a_1\mapsto \hat{Q}_2(0,a_1)$ is a tilted process $\hat{Q}_2 \equalsd Q_2^{(\tau_2(s_2))}$ independent of $X_1$.
From \reflemma{KX0XN}, we see that here too the processes $\hat{Q}_1,\hat{Q}_2$, considered jointly, can realise any desired tiltings of the processes $Q_1,Q_2$: since $\tau_2(s_2)=s_2$, we can obtain the desired tilting of $Q_2$ by varying $s_2$; then we can obtain any desired tilting parameter $\tau_1(s_1,s_2) = s_1+g_{2,1}(s_2)$ by varying $s_1$ with $s_2$ held fixed.

From \eqref{EVarTiltedQ} we see that $\condE{ \! \hat{X}_2}{\hat{X}_1}$ and $\Var\condparentheses{ \! \hat{X}_2}{\hat{X}_1}$ depend affinely on $\hat{X}_1$: writing $\hat{\xi}_2$ and $(\hat{Z}_{2,1,i})_{i=1,\dotsc,d_1}$ for the random variables from \refdefn{RecComp} for $\hat{Q}_2$,
\begin{equation}
  \begin{aligned}
    \condE{ \! \hat{X}_2}{\hat{X}_1} = \E(\hat{\xi}_2) + \textstyle{\sum_i} \E(\hat{Z}_{2,1,i}(1)) \hat{X}_{1,i} = \E(\hat{Q}_2(0,\hat{x}_1))&
    \\
    \Var\condparentheses{ \! \hat{X}_2}{\hat{X}_1} = \Var(\hat{\xi}_2) + \textstyle{\sum_i} X_{1,i}\Var(\hat{Z}_{2,1,i}(1)) = \Var(\hat{Q}_2(0,\hat{x}_1))&
  \end{aligned}
  \quad\text{evaluated at }\hat{x}_1=\hat{X}_1
  .
\end{equation}
In particular,
\begin{equation}\label{EEcondVarhatX2}
  \begin{aligned}
    \E\bigl( \hat{X}_2 \bigr) &= \E(\hat{Q}_2(0,\E(\hat{X}_1)))
    ,
    \\
    \E\bigl( \Var\condparentheses{ \! \hat{X}_2}{\hat{X}_1} \bigr) &= \Var(\hat{Q}_2(0,\E(\hat{X}_1)))
    .
  \end{aligned}
\end{equation}
Write $\hat{A}$ for the $d_2\times d_1$ matrix whose $i^\text{th}$ column is $\E(\hat{Z}_{2,1,i})$.
Then 
\begin{equation}\label{condEhatX2hatA}
  \condE{ \! \hat{X}_2}{\hat{X}_1} = \E(\hat{\xi}_2)+\hat{A}\hat{X}_1
\end{equation} 
and the Variance Partition Formula yields the variance-covariance matrix
\begin{align}
  \Var\mat{\hat{X}_1 \\ \hat{X}_2} &= \mat{\Var(\hat{X}_1) & \Cov(\hat{X}_1, \hat{X}_2) \\ \Cov(\hat{X}_2,\hat{X}_1) & \Var(\hat{X}_2)}
  \notag\\&
  = \mat{\Var(\hat{X}_1) & \Cov\left( \hat{X}_1, \condE{ \! \hat{X}_2}{\hat{X}_1} \right) \\ \Cov\left( \condE{ \! \hat{X}_2}{\hat{X}_1},\hat{X}_1 \right) & \Var\left( \condE{ \! \hat{X}_2}{\hat{X}_1} \right)+\E\left( \Var\condparentheses{ \! \hat{X}_2}{\hat{X}_1} \right)}
  \notag\\&
  = \mat{\Var(\hat{X}_1) & \Var(\hat{X}_1)\hat{A}^T \\ \hat{A}\Var(\hat{X}_1) & \hat{A}\Var(\hat{X}_1)\hat{A}^T + \Var(\hat{Q}_2(0,\E(\hat{X}_1))) }
  \label{VarX1X2hat}
\end{align}
by \eqref{EEcondVarhatX2}--\eqref{condEhatX2hatA} and $\Cov(AX,BY)=A\Cov(X,Y)B^T$ for deterministic matrices $A,B$.

According to \refprop{SPAviaTilting}, the saddlepoint equations can be expressed in terms of the tilted means $\E(\hat{X}_1),\E(\check{Y}_1)$ and $\E(\hat{X}_2),\E(\hat{Y}_2)$.
Since $X_1=Y_1$, both $\hat{X}_1$ and $\hat{Y}_1$ are tiltings of the same random variable, and as remarked above both tilting parameters can have arbitrary values.
So the set of attainable tilted means (by varying the respective tilting parameters) is the same for both distributions, and thus $\E(\hat{X}_1)=x_1$ has a solution if and only if $\E(\check{Y}_1)=x_1$ does.
Moreover, if $\E(\hat{X}_1)=x_1$ holds, then \eqref{EEcondVarhatX2} shows that $\E(\hat{X}_2),\E(\hat{Y}_2)$ reduce to $\E(\hat{Q}_2(0,x_1)),\E(\check{Q}_2(0,x_1))$.
The latter two expectations are again arbitrary tiltings of the same random variable, yielding the same sets of attainable tilted means.
We conclude that there exists a solution to the saddlepoint equations for the random pair $(X_1,X_2)$ and the data values $(x_1,x_2)$,
\begin{equation}\label{SEhatX12}
  \E(\hat{X}_1)=x_1
  ,
  \quad
  \E(\hat{X}_2)=x_2
  ,
\end{equation}
if and only if there exists a solution to
\begin{equation}\label{SEcheckY12}
  \E(\check{Y}_1)=x_1
  ,
  \quad
  \E(\check{Y}_2)=x_2
  ,
\end{equation}
the saddlepoint equations for the random pair $(Y_1,Y_2)$ with the same data values.

Now suppose that the tilting parameters $(s_1,s_2)$ and $(\sigma_1,\sigma_2)$ are chosen so that  \eqref{SEhatX12}--\eqref{SEcheckY12} hold.
Since $X_1=Y_1$, we conclude that $\hat{X}_1$ and $\check{Y}_1$ are tiltings of the same distribution with the same tilted means.
By \reflemma{MeansDetermineTilting}, $\hat{X}_1 \equalsd \check{Y}_1$.
Moreover by \eqref{EEcondVarhatX2}, 
\begin{equation}
  \E(\check{Q}_2(0,x_1)) = \E(\check{Y}_2) = \E(\hat{X}_2) = \E(\hat{Q}_2(0,\E(\hat{X}_1))) = \E(\hat{Q}_2(0,x_1))
  ,
\end{equation}
so another application of \reflemma{MeansDetermineTilting} gives $\hat{Q}_2(0,x_1) \equalsd \check{Q}_2(0,x_1)$.
From \reflemma{PreserveRecComp} it follows that $\bigrelent{(\hat{X}_1,\hat{X}_2)}{(X_1,X_2)}=\bigrelent{(\check{Y}_1,\check{Y}_2)}{(Y_1,Y_2)}$.
The two variance-covariance matrices are not equal in general, but from \eqref{VarX1X2hat} and \eqref{SEhatX12} we have
\begin{align}
  \Var\mat{\hat{X}_1 \\ \hat{X}_2} 
  &= 
  \mat{I & 0 \\ A & I} \mat{\Var(\hat{X}_1) & 0 \\ 0 & \Var(\hat{Q}_2(0,x_1))} \mat{I & A^T \\ 0 & I}
  \notag\\&
  = \mat{I & 0 \\ A & I} \mat{\Var(\check{Y}_1) & 0 \\ 0 & \Var(\check{Q}_2(0,x_1))} \mat{I & A^T \\ 0 & I}
  \notag\\&
  = \mat{I & 0 \\ A & I} \Var\mat{\check{Y}_1 \\ \check{Y}_2} \mat{I & A^T \\ 0 & I}
\end{align}
and therefore $\Var\mat{\hat{X}_1 \\ \hat{X}_2}, \Var\mat{\check{Y}_1 \\ \check{Y}_2}$ have equal determinants.
By \refprop{SPAviaTilting}, 
\begin{align*}
  \hat{f}_{X_1,X_2}(x_1,x_2) 
  &= 
  \frac{\exp\left( -\bigrelent{(\hat{X}_1,\hat{X}_2)}{(X_1,X_2)} \right)}{\sqrt{\det\left( 2\pi \Var\smallmat{\hat{X}_1 \\ \hat{X}_2} \right)}} 
  \\&
  = \frac{\exp\left( -\bigrelent{(\check{Y}_1,\check{Y}_2)}{(Y_1,Y_2)} \right)}{\sqrt{\det\left( 2\pi \Var\smallmat{\check{Y}_1 \\ \check{Y}_2} \right)}} 
  = \hat{f}_{Y_1,Y_2}(x_1,x_2)
  .
  \qedhere
\end{align*}
\end{proof}


\begin{thebibliography}{1}
  
  \bibitem{Butler2007}
  Ronald~W. Butler.
  \newblock {\em Saddlepoint approximations with applications}.
  \newblock Cambridge series on statistical and probabilistic mathematics.
  Cambridge University Press, Cambridge, 2007.
  
  \bibitem{DavHauKra2021}
  A.~C. Davison, S.~Hautphenne, and A.~Kraus.
  \newblock Parameter estimation for discretely observed linear birth-and-death
  processes.
  \newblock {\em Biometrics}, 77(1):186--196, 2021.
  
  \bibitem{GoodmanMLEAccuracy2022}
  Jesse Goodman.
  \newblock Asymptotic accuracy of the saddlepoint approximation for maximum
  likelihood estimation.
  \newblock {\em Ann. Statist.}, 50(4):2021--2046, 2022.
  
  \bibitem{PedDavFok2015}
  Xanthi Pedeli, Anthony~C. Davison, and Konstantinos Fokianos.
  \newblock Likelihood estimation for the {INAR$(p)$} model by saddlepoint
  approximation.
  \newblock {\em Journal of the American Statistical Association},
  110(511):1229--1238, 2015.
  
\end{thebibliography}
\end{document}